%% file: arxiv.tex
\documentclass[12pt,reqno]{amsart}
\calclayout
\usepackage{amsthm,amsmath,amsfonts,amssymb,mathtools,dsfont}
\mathtoolsset{showonlyrefs}
\usepackage{graphicx}
\usepackage[mathcal]{euscript}
\usepackage{xcolor}
\usepackage{tikz}
\usetikzlibrary{cd}
\usepackage{amsrefs}
\usepackage[colorlinks,citecolor=blue,urlcolor=blue, linkcolor=blue]{hyperref}

\theoremstyle{plain}
\newtheorem{theorem}{Theorem}[section]

\newtheorem{lemma}[theorem]{Lemma}
\newtheorem{proposition}[theorem]{Proposition}

\theoremstyle{remark}
\newtheorem{definition}[theorem]{Definition}
\newtheorem{example}[theorem]{Example}
\newtheorem{remark}[theorem]{Remark}

\newtheorem*{assumption*}{Assumption}

\input{newcommands}

\newcommand{\EE}{\mathrm{EE}}
\newcommand{\NE}{\mathrm{NE}}
\newcommand{\EN}{\mathrm{EN}}
\newcommand{\E}{\mathrm{E}}
\newcommand{\N}{\mathrm{N}}
\newcommand{\NN}{\mathrm{NN}}

\newcommand{\mfC}{\mathfrak{C}}

\def\relog{\mathtt{Log}}
\def\sing{\mathcal{V}}

\begin{document}
\title[The Efficiency-Complexity Tradeoff]{EFFICIENCY--COMPLEXITY TRADEOFF AND CONSTRAINED PERCOLATION}

\author{Yuliy Baryshnikov}
\address{Department of Mathematics and ECE, University of Illinois Urbana-Champaign}
\email{ymb@uiuc.edu}

\author{Efe Onaran}
\address{Department of Mathematics and Department of Electrical and Systems Engineering, University of Pennsylvania}
\email{eonaran@seas.upenn.edu}

\subjclass[2020]{Primary 60K35; Secondary 05A16, 91B06}
\keywords{Directed first-passage percolation, constrained lattice paths,
entropy-parameterized time constants, analytic combinatorics in several variables,
bounded rationality, resource-limited agents, energy-efficient navigation,
efficiency--complexity tradeoff}

\begin{abstract}
\input{abstract}
\end{abstract}

\maketitle

\input{main}

\bibliography{references}  
\end{document}

%% file: newcommands.tex
\DeclareMathOperator{\card}{\mathtt{card}}
\DeclareMathOperator{\amoeba}{\mathtt{amoeba}}

\DeclareMathOperator*{\argmin}{arg\,min\;}

\newcommand{\zd}{\mathbb{Z}^d}
\newcommand{\Qd}{\mathbb{Q}^d}
\newcommand{\limas}{\xrightarrow{\textup{a.s.}}}

\newcommand{\seceq}{\setcounter{equation}{0}}

\newcommand{\pr}{\mathbb P}
\newcommand{\ex}{\operatorname{\mathbb E}}

\newcommand{\mc}{\mathcal}
\newcommand{\mf}{\mathfrak}
\newcommand{\mbb}{\mathbb}
\newcommand{\ind}{\operatorname{\mathds{1}}}
\newcommand{\mb}{\mathbf}
\newcommand{\bs}{\boldsymbol}

%% file: abstract.tex
We study the efficiency--complexity tradeoff for resource-limited agents through directed first-passage percolation. Nonnegative i.i.d. edge weights represent physical energy expenditure, while the logarithmic size of a constrained path family measures representational complexity. For translation-invariant families, a path-joining property yields deterministic large-distance costs in rational directions and joint convexity in constraint parameter and direction, under suitable moment and feasibility assumptions. Our main result concerns the limiting cost parameterized by path-family entropy. Using techniques from analytic combinatorics in several variables (ACSV), we establish its almost-sure existence and joint continuity in entropy and direction for nested families satisfying additional generating-function conditions, on the corresponding entropy range. Path-count asymptotics supply the change of parameter linking this cost to the constraint-parameterized cost. Two planar examples constrain the number of turns or the number of East-runs of length at least two.

%% file: main.tex
\section{Introduction}
\subsection{Motivation}
Biological organisms routinely operate under simultaneous constraints on physical affordances, such as the energy budget, maximal available force etc., and information-processing capacity, be that computational bandwidth, or available memory. Consequently, it is natural to expect a trade-off between the energetic cost of physical operations and the costs associated with acquiring, storing, and processing information. This trade-off appears across scales, from intracellular regulatory networks and bacterial chemotaxis to animal navigation, collective behavior, and cognitive decision making.

Generically, we will be referring to this as the {\em efficiency-complexity trade-off}.

The idea that optimal behavior in a natural system must be understood relative to finite computational resources has a long history. Simon's theory of bounded rationality challenged the assumption that biological or economic agents can perform unconstrained optimization, arguing instead that decision making is fundamentally limited by memory, computation, and available information \cites{Simon1955,Simon1956}. More recently, information-theoretic approaches to bounded rationality have formulated these limitations by introducing explicit costs associated with information processing \cites{OrtegaBraun2013,OrtegaEtAl2015}. In these formulations, an agent's objective is not merely to maximize utility but rather to optimize a balance between utility and the information required to achieve it.

Related perspectives arise in neuroscience and cognitive science. Resource-rational theories model cognition as the optimal use of limited computational resources \cite{LiederGriffiths2020}. Rate-distortion  describes the trade-off between representational complexity and behavioral performance using the tools of information theory \cites{Sims2016,ArumugamEtAl2022}. In evolutionary settings, organisms may benefit from maintaining richer internal representations of their environments, but these representations require metabolic resources to develop and maintain. The resulting compromises between representational fidelity and physical costs have been studied through theories of lossy biological compression and efficient coding \cites{MarzenDeDeo2017,Barlow1961}.

In biological systems these information processing constraints manifest themselves through substantial energetic demands of nervous systems. Formation of neural tissue is metabolically expensive, and information processing itself consumes a significant fraction of an organism's energy budget. Experimental and theoretical studies have argued that energetic constraints play a central role in shaping neural coding, sensory processing, and cognitive architecture \cites{LaughlinEtAl1998,NivenLaughlin2008,JamadarEtAl2024}. From this perspective, an organism cannot arbitrarily increase the sophistication of its physical actions without incurring a corresponding energetic cost.

Navigation provides a natural setting in which these competing pressures interact. Consider an organism attempting to move through a heterogeneous environment. Access to a larger set of candidate routes generally improves the quality of the chosen trajectory, reducing the required travel time or energetic expenditure. However, evaluating, storing, or representing a larger collection of possible paths requires greater computational resources. Similar considerations arise in animal migration, foraging, insect route learning, and spatial cognition, where improved navigation performance is believed to rely on increasingly sophisticated internal representations of the environment \cites{Tolman1948,OkeefeNadel1978}. An analogous trade-off appears in autonomous robotics and motion planning, where computational effort is often exchanged for improved path quality.

\subsection{Goals}
The present paper develops a mathematical framework for studying this general principle in the setting of directed first-passage percolation. Classical first-passage percolation seeks the minimum-weight path through a random environment and may be viewed as an idealized agent capable of evaluating all admissible routes. In contrast, we consider agents whose search is restricted to a constrained family of paths. These constraints represent limitations on memory, computational power, or representational complexity. The resulting optimization problem captures a trade-off between physical efficiency and computational affordance: enlarging the set of admissible trajectories improves the quality of the best available route, but simultaneously increases the informational resources required to describe and search that set.

A central theme of this work is the interpretation of path-family size as a measure of computational complexity. If the admissible path family contains $N$ candidate trajectories, then approximately ($\log_2 N$) bits are required to specify an element of that family. This observation motivates the entropy-based parameterization introduced later in the paper. From this viewpoint, the growth rate of the admissible path class quantifies the information-processing capability of the agent, while the associated first-passage time quantifies its physical performance. The resulting framework can therefore be interpreted as a probabilistic model of the trade-off between energetic expenditure in the environment and energetic expenditure devoted to information processing.

\subsection{Outline of Results}
Our main results concern filtrations of directed lattice paths: translation-invariant path classes in which joining two permissible paths gives a permissible path at the length-weighted average of their constraint parameters. The constraint parameter limits path complexity, for example by bounding the number of turns per unit length. The edge weights represent the physical energy expended in traversing the corresponding edges; we assume these weights are i.i.d., nonnegative, and have finite second moment. Under a feasibility condition requiring eventual nonemptiness of path families along each nonzero nonnegative lattice ray, we prove almost-sure convergence to deterministic time constants in rational directions and joint convexity in the constraint parameter and direction (Lemmas~\ref{subadd} and~\ref{convexitylem}). For nested feasible filtrations, in which increasing the parameter retains every previously permissible path, we also prove that the exponential growth rate of the permissible path counts exists and is jointly continuous and concave (Lemma~\ref{limlem}). When the path-count generating function is rational, we express this rate as the infimum of a linear function over its logarithmic convergence domain; under the assumptions of Definition~\ref{defsmoothpathclass}, the rate can be evaluated at positive minimal critical points. These results yield an entropy-based reparameterization for which the time constant exists and is jointly continuous in entropy and direction on the corresponding entropy range (Theorem~\ref{reparamconv}). We verify the required assumptions for two planar examples, constraining the number of turns or the number of East-runs of length at least two (Sections~\ref{turnssmooth} and~\ref{2Es}).

\section{A Toy Model}
\seceq

As an illustration of the \emph{complexity vs. efficiency} tradeoff, we consider the following toy example. 

Consider an agent whose goal is to penetrate a \emph{layered} environment\footnote{In the spirit of time, one can envision an agentic exploration with the goal of finding vulnerabilities in a deeply protected system.}. To reach the $n$th layer starting from layer-0, the agent executes steps, with some options to follow at each step: the (perhaps stochastic) number of such options depends on the resources ({\em computational}, e.g., how many options at each step it can process) the agent is afforded. Each step takes some time to execute, and the agent's task is to find the most expeditious ({\em efficient}) path to the $n$-th level.
We assume, for simplicity, that the durations of each option at each step are positive {\em i.i.d.} random variables.

The optimization problem can be written as
\begin{align}
    M_n\coloneqq \min_{|\mb P|=n}\sum_{e\in \mb P} \tau_e,
\end{align}
where the minimum is over all paths $\mb P$ from layer-0 to layer-$n$, and $\tau_e$ denotes the i.i.d. duration associated with edge $e$.

\begin{figure}[ht] 
\centering
\input{tikz/BranchRW}
\caption{Depiction of a toy model that illustrates the cost efficiency vs. computational overhead tradeoff. An agent is aiming to reach layer-$n=3$ from layer-0 by traveling through the edges of a tree where each node has $b=2$ children, and each edge is assigned a non-negative i.i.d. additive cost which we denote by $\tau_{i,j}$ for the $j$'th edge between layer-$(i-1)$ and layer-$i$. The (assumed) path with the least cost is shown in red and the associated cost is $M_n$.}  
\label{BranchRWfig}
\end{figure}

One recognizes, of course, the well-studied problem on \emph{branching random walk}, where, in general, the number of children of each node is a random i.i.d. integer variable $>1$ (to assure survival) with bounded mean number $b$ of offspring, so that the environment is a realization of the \emph{Galton-Watson tree}.

If one assigns {\em iid} random duration $\tau_e$ to each edge, we have the question of what is the shortest path from the root to the level $n$. It had been established in \emph{Biggins–Hammersley–Kingman} theorem \cites{biggins, hammersley, kingmanbranch}
that 
\begin{align}
    \lim_{n\to\infty} \frac{M_n}{n} = \mu
\end{align}
almost surely, for a deterministic $\mu$ which depends on $b$ and the distribution of the edge durations $\tau_e\sim X$. The standard heuristic (computing the expected number of paths to the level $n$ of the length $\leq n\mu$) leads to the following description of $\mu$.

Let $\Lambda(u)\coloneqq \log \ex[e^{u\tau}]$ be the cumulant generating function for the (non-negative) random variable $\tau$, and $u_*<0$ solves
\[u\Lambda'(u)-\Lambda(u) = \log b. \]
Then
\[\mu= \Lambda'(u_*).\] 

One can easily see that as $b$ grows, $\mu$ decreases. This matches the intuition: the more computational resources the agents are given, the better solutions they come up with.

Consider an example where each edge cost is {\em exponentially distributed with unit mean}. Then $\Lambda(u)=-\log(1-u)$, whence $u_*$ satisfies
\[\frac{u_*}{1-u_*}+\log(1-u_*) = \log b, \]
and (from $\Lambda'(u_*)=(1-u_*)^{-1}=\mu$)
\begin{align}
    \mu-1 - \log \mu = \log b.
\end{align}
It is straightforward to check that $\mu$ can be expanded into asymptotic series in $b^{-1}$, with the leading term
\[
\mu=\frac{1}{eb}+O(b^{-2})
\] 
giving a clear expression of the computational affordance (agent's abilities) vs. time to success (efficiency) tradeoff.

\section{Main Model}
\seceq
Unlike the model in the previous section, our main efforts and results are focussed on the percolation on lattices $\zd$.

\subsection{Percolation}
Assume that each edge between the nearest neighbors of the lattice $\zd$, $d\geq 2$ is assigned an i.i.d. nonnegative random variable $\tau_{\bs x,\bs y}$ for each edge $(\bs x,\bs y), \bs x,\bs y\in \zd$ such that $|\bs x-\bs y|=1$ (here $|\cdot|$ denotes the $\ell_1$-norm). We will denote by $\tau$ an independent random variable with the common distribution of the weight of each edge. We assume throughout that
\begin{align}
    \ex\tau<\infty \quad\text{and}\quad \ex\tau^2<\infty.
\end{align}

We let $\bs o$ denote the origin of $\zd$, and write $\bs x\leq \bs y$ if and only if
 $x_i\leq y_i $ for all $1\leq i\leq d$, where $\bs x = (x_1,\ldots,x_d)$ and $\bs y = (y_1,\ldots,y_d)$. If $\bs x\leq \bs y $ and $\bs x \neq  \bs y $, we denote this by $\bs x<\bs y$.
We define a path $\mb A$ of length-$k\geq 1$, by the sequence of points it passes through, i.e., $(\bs a_0,\bs a_1,\ldots, \bs a_{k})$ where $\bs a_i\in \zd$ for all $0\leq i\leq k$ and  
\begin{align}
|\bs a_{i-1}-\bs a_{i}| &=1,\\
\bs a_{i}&\geq \bs a_{i-1} 
\end{align}
for all $1\leq i\leq k$. We say that such $\mb A=(\bs a_0,\bs a_1,\ldots, \bs a_{k})$ \emph{starts} at $\bs a_0$ and \emph{ends} at $\bs a_k$, and we will let $|\mb A|=k$ denote its length. We define the \emph{passage time} for the path $A$ as
\begin{align}
    T(\mb A) \coloneqq \sum_{i=1}^{|\mb A|} \tau_{\bs a_{i-1},\bs a_{i}}.
\end{align}
 The \emph{path concatenation} for two paths that both start at $\bs o$ is defined as
\begin{align}
    \mb A + \mb B\coloneqq (\bs o,\bs a_1,\ldots, \bs a_{|A|}, \bs a_{|A|} + \bs b_1,\ldots, \bs a_{|A|} +\bs  b_{|B|}) , 
\end{align}
 for 
 \begin{align}
     \mb A&= (\bs o,\bs a_1,\ldots,\bs  a_{|A|})\\
     \mb B&= (\bs o,\bs b_1,\ldots,\bs  b_{|B|}).
 \end{align}
For two sets of paths $\mc A, \mc{B}$,  we also define the \emph{path set product} $\mc A\times \mc B$ as 
\begin{align}
    \mc{A}\times \mc{B} \coloneqq \{\mb A + \mb B:\mb A\in\mc{A},  \mb B\in \mc{B} \}, 
\end{align}
so, in particular, $\mc A\times\varnothing=\varnothing=\varnothing\times\mc B$.
 We will use the shorthand notations $\mbb Z_{\geq 0}$ ($\mbb Z_{> 0}$), $\mbb Q_{\geq 0}$ ($\mbb Q_{> 0}$), and $\mbb R_{\geq 0}$ ($\mbb R_{> 0}$) for non-negative (positive) integers, rationals, and reals, respectively. The notation $\mbb N$ is also used for positive integers, and $\mbb C$ for complex numbers. We will use the notation $\card(\cdot)$ for the cardinalities of finite sets.  

\subsection{Constrained Percolation}
Classical first-passage percolation problem concerns the distance function of $\bs y,\bs z\in\zd$ defined as 
\begin{align}
    \inf_{\mb C} T(\mb C),
\end{align}
 where the infimum is over the set of all paths starting at $\bs y$ and ending at $\bs z$. 

Here, instead, we will be interested in the \emph{constrained} distance function
\begin{align}
T(\mc{C}_{\bs y,\bs z}^\gamma)\coloneqq \inf_{\mb C\in \mc{C}_{\bs y,\bs z}^\gamma} T(\mb C)
\end{align}
where $\mc{C}_{\bs y,\bs z}^\gamma$ is a  given set of \emph{permissible} paths  that start at $\bs y$ and end at $\bs z> \bs y$, parameterized by some $\gamma\in \mbb R_{\geq 0}$. As we are in the \emph{directed} variant of the first passage percolation, throughout the paper, we will restrict ourselves to the paths with start and end points on the non-negative orthant. In other words, $\mc C_{\bs y,\bs z}^\gamma \neq  \varnothing$ will imply $\bs y\in \zd_{\geq 0}$, $\bs z\in \zd_{\geq 0}$ and $\bs z> \bs y$.  
 
We will be working with  \emph{nested families} of paths
\begin{align}
    \mfC^\gamma \coloneqq \{\mc C^\gamma_{\bs y, \bs z},\; \bs y, \bs z\in\zd_{\geq 0}\}, 
\end{align} 
such that
\[
    \mc C^\gamma_{\bs y,\bs z}\subseteq\mc C^{\gamma'}_{\bs y,\bs z}
    \qquad\text{for all }\bs y,\bs z\in\zd_{\geq0}\text{ and }0\leq\gamma\leq\gamma'.
\]

The corresponding collection of these families, a \emph{path class}, is then
\begin{align}
    \mfC \coloneqq \{\mfC^\gamma,\; \gamma\in\mbb R_{\geq 0}\} = \{\mc C^\gamma_{\bs y, \bs z},\; \gamma\in\mbb R_{\geq 0},\; \bs y, \bs z\in\zd_{\geq 0}\}.
\end{align}

\begin{remark}
Conceptually, the constraint that a path belongs to a path family $\mfC^\gamma$ models the restrictions on the {\em computational} affordances of the system: the greater are those, the more paths the system can explore.
\end{remark}

We will be working primarily with translation invariant families (and classes) of paths:

\begin{definition}[Translation Invariance]\label{transinvass}
We say that the family $\mfC^\gamma = \{\mc C^\gamma_{\bs y, \bs z},\; \bs y, \bs z\in\zd_{\geq 0}\}$ is \emph{translation invariant} if for any $\bs y,\bs z\in\zd_{\geq 0}$ satisfying $\bs y\leq\bs z$,
\begin{align}\label{nonorigineqn}
    \mc{C}_{\bs y,\bs z}^\gamma = \bs y+ \mc{C}_{\bs o,\bs z-\bs y}^\gamma\coloneqq \{\bs y+ \mb C: \mb C\in\mc C_{\bs o, \bs z-\bs y}^\gamma\},
\end{align}
where the translation of a path $\mb C=(\bs o, \bs c_1,\ldots, \bs c_{|\mb C|})$ by $\bs y$ is defined as
\begin{align}
    \bs y+ \mb C \coloneqq (\bs y, \bs y+\bs c_1,\ldots, \bs y+\bs c_{|\mb C|}).
\end{align}
For $\bs y\nleq\bs z$, we set $\mc C^\gamma_{\bs y,\bs z}=\varnothing$.

Accordingly, a class $\mfC \coloneqq \{\mfC^\gamma,\; \gamma\in\mbb R_{\geq 0}\}$ is called translation invariant if the family $\mfC^\gamma$ is so for all $\gamma\in\mbb R_{\geq 0}$.

\end{definition}
Note that for a path family that satisfies translation invariance, the paths starting at $\bs o$ determine the whole family. In that regard, to simplify our notation when we talk about translation invariant families, we will refer to $\mc C_{\bs o,\bs z}^\gamma$ also as $\mc C_{\bs z}^\gamma$.
Furthermore, we make the following definition for a class of paths. 

\begin{definition}[Filtration]\label{pathconsassum}
 A translation invariant class $\mfC = \{\mc C^\gamma_{\bs y, \bs z},\; \gamma\in\mbb R_{\geq 0},\; \bs y, \bs z\in\zd_{\geq 0}\}$ is called a \emph{filtration} if for any $\bs y,\bs  z\in\zd_{\geq 0}$ satisfying $\bs y+\bs z\neq\bs o$, and $\alpha, \beta \in \mbb R_{\geq 0}$, 
 \begin{align}\label{pathcons}
\mc C^{\alpha}_{\bs z} \times \mc C^{\beta}_{\bs y}   \subseteq
    \mc C^{\gamma}_{\bs z+\bs y}.
\end{align}
 where
\begin{align*}
     \gamma = \frac{\alpha |\bs z| + \beta |\bs y|}{|\bs z+\bs y|}. 
\end{align*}

\end{definition}

\begin{remark}
    Note that given a filtration $\mfC$, Definition~\ref{pathconsassum} implies for all $\gamma\in\mbb R_{\geq 0}$, $\bs z\in\zd_{\geq 0}$ and $k\in \mbb N$ that
    \begin{align}
\underbrace{\mc C^{\gamma}_{\bs z} \times \mc C^{\gamma}_{\bs z } \times \cdots \times \mc C^{\gamma}_{\bs z}}_{\text{$k$ times}}   \subseteq
    \mc C^{\gamma}_{k\bs z},  
\end{align}
  and therefore, due to the translation invariance and the symmetry of edge weights, 
  \begin{align}\label{pathcons2ineq}
      \ex[T(\mc C^{\gamma}_{k\bs z})]\leq k \ex[T(\mc C^{\gamma}_{\bs z})].
  \end{align}
\end{remark}

Finally, we make the following definition of \emph{feasibility} for a family.  

\begin{definition}\label{nonemptydef}
A translation-invariant family of paths $\mfC^\gamma$ is called \emph{feasible} if, for every $\bs z\in\zd_{\geq0}\setminus\{\bs o\}$, there exists $n_0\in\mbb N$ such that $\mc C^\gamma_{n\bs z}$ is nonempty for all $n\geq n_0$. The class $\mf C = \{\mf C^{\gamma}\}$ is called feasible if $\mfC^\gamma$ is so for all $\gamma>0$.

\end{definition}

\begin{remark}\label{axispathsrmk}
For a feasible family $\mfC^\gamma$ in a filtration, fix $K_\gamma\in\mbb N$ such that $\mc C^\gamma_{n\bs e_i}\ne\varnothing$ for all $n\geq K_\gamma$ and $i=1,\ldots,d$, where $\bs e_i$ is the $i$th coordinate unit vector. Concatenating these axis paths shows that $\mc C^\gamma_{\bs z}\ne\varnothing$ whenever $\bs z\ne\bs o$ and every coordinate of $\bs z$ is either zero or at least $K_\gamma$.
\end{remark}

\begin{definition}\label{countrmk}
    Take an arbitrary {\em pattern counting} function $f$ with the domain as the set of all paths with the starting point $\bs o$ and range as the non-negative integers, which satisfies the \emph{(almost) subadditivity} property, i.e.,
    \begin{align}\label{fsubadd}
        f(\mb A+\mb B)\leq f(\mb A) + f(\mb B) + 1
    \end{align}
    for all paths $\mb A,\mb B$. For $\gamma \in\mbb R_{\geq 0}$ and $\bs z\in\zd_{\geq 0}$ define
    \begin{align} \label{Czgammadefeqn}
        \mc C_{\bs z}^\gamma \coloneqq \{ \text{$\mb A$ is a path from $\bs o$ to $\bs z$ s.t. $f(\mb A)< \lfloor|\bs z|\gamma\rfloor$} \}.
    \end{align}
     Given $\mc C_{\bs z}^\gamma$, the family $\mfC^\gamma$ is defined through translation invariance.
     It is straightforward to show that the family of permitted sets defined in such way is always a filtration, whereas the feasibility depends on $f$. 
\end{definition}

\subsection{Examples}
The following examples will be explored in detail.
\begin{example}\label{numoftruns exa}
For a path $\mb A=(\bs o,\bs a_1, \ldots,\bs a_{|A|})$, let $f(\mb A)$ be defined as
\begin{align}
      f(\mb A) \coloneqq
          \sum_{i=1}^{|\mb A|-1} \ind\left\{\bs a_{i} \neq \frac{\bs a_{i+1} + \bs a_{i-1}}{2}\right\}.
\end{align}
That is, $f$ counts the number of \emph{turns} on a given path $\mb A$. 
 Then $f$ satisfies \eqref{fsubadd} since there could be at most one additional turn on the concatenation point of two paths. Moreover,
$\mfC^\gamma$ defined through such $f$ is feasible for all $\gamma>0$, therefore $\mf C$ is feasible.

It is instructive to interpret the families $\mfC^\gamma$ from the computational viewpoint. The number of directed paths in $\zd$ of length $n$ starting at $\bs o$, with unrestricted endpoint, is $d^n$, and a typical path has $\approx \frac{d-1}{d} n$ turns. Restricting the path to have no more than $\gamma n$ turns makes this number exponentially smaller than $d^n$, for $0<\gamma<\frac{d-1}{d}$, relaxing the computational requirements (say, number of bits necessary to encode a path). For fixed endpoints $\bs o,\bs z$, $T(\mc C^\gamma_{\bs z})$ measures physical cost, whereas $\log_2|\mc C^\gamma_{\bs z}|$ measures representational complexity.

We will more explicitly treat the complexity in terms of set-size in Section~\ref{reparam sec}.

\end{example}

\begin{remark}
    Another example satisfying \eqref{fsubadd} is the number of maximal runs of a specified step direction of length at least $\ell\geq2$. A maximal run is a consecutive block of steps in that direction that cannot be extended at either end. For example, writing $E$ and $N$ for East and North steps, the path $EEE\,N\,EE\,N\,EEEE$ has three maximal East-runs of length at least two. At the concatenation point, only the final run of the first path and the initial run of the second can merge, so the count is at most the sum of the two counts plus one.
\end{remark}

\section{Limits for Filtrations}
\seceq

Our first result concerns the random variable $T(\mc C^\gamma_{\bs q})$, the weight of the optimal permitted  path from $\bs o$ to a real end point $\bs q\in\mbb R^d_{\geq 0}$, which is defined as 
\begin{align}\label{rationalendptseqn}
T(\mc C_{\bs q}^\gamma) \coloneqq T(\mc C^\gamma_{ \lfloor \bs q\rceil }) = \inf_{\mb A\in \mc C_{\lfloor \bs q\rceil}^\gamma} T(\mb A)
\end{align}
where 
\[\lfloor \bs q\rceil \coloneqq  \text{the lexicographically smallest element of }\argmin_{\bs z\in\zd_{\geq 0}} |\bs q-\bs z|.\]
In particular, this convention is compatible with translation by lattice points: $\lfloor \bs q+\bs z\rceil=\lfloor \bs q\rceil+\bs z$ for all $\bs q\in\mbb R^d_{\geq 0}$ and $\bs z\in\zd_{\geq 0}$.

\subsection{Subadditive Ergodic Theorem}
\begin{lemma}\label{subadd}
Let $\mfC = \{\mfC^\alpha\} = \{\mc C^\alpha_{\bs y, \bs z}\}$ be a filtration, and $\mfC^\gamma$, $\gamma\in\mbb R_{\geq 0}$ be a feasible family in this class. Then, for any $\bs q\in\Qd_{\geq 0}$ satisfying $|\bs q|=1$,
   \begin{align}\label{mucgamqdefeqn}
     \frac{T(\mc C^\gamma_{n\bs q})}{n} \limas \mu_{\mfC}(\gamma,\bs q)
    \coloneqq\inf_{k\geq 1} \frac{\ex [T(\mc C^\gamma_{kN\bs q})]}{kN},
    \end{align}
    where
    \begin{align} \label{Ndefin}
        N \coloneqq&   K_{\gamma} \times \inf\{m\in \mbb N : m\bs q\in\mbb Z_{\geq 0}^d\},
    \end{align}
and $K_\gamma$ is as chosen in Remark~\ref{axispathsrmk}.
\end{lemma}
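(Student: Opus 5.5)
We apply Kingman's (Liggett's) subadditive ergodic theorem along the lattice ray $\{mN\bs q\}_{m\geq 0}$, and then interpolate to general $n$. Set $\bs v\coloneqq N\bs q\in\zd_{\geq 0}$. By the choice of $N$, every nonzero coordinate of $\bs v$ is a multiple of $K_\gamma$, hence at least $K_\gamma$. Remark~\ref{axispathsrmk} then gives $\mc C^\gamma_{m\bs v}\neq\varnothing$ for all $m\geq1$, so every quantity below is finite.

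\textbf{Step 1 (subadditive array).} For $0\leq l<m$ define
\[
X_{l,m}\coloneqq T\bigl(\mc C^\gamma_{l\bs v,\,m\bs v}\bigr)=\inf_{\mb C\in l\bs v+\mc C^\gamma_{(m-l)\bs v}}T(\mb C).
\]
\begin{itemize}
\item Subadditivity: the filtration property with $\alpha=\beta=\gamma$, together with translation invariance, gives $\mc C^\gamma_{l\bs v,k\bs v}\times\mc C^\gamma_{k\bs v,m\bs v}\subseteq\mc C^\gamma_{l\bs v,m\bs v}$. Concatenating optimal paths yields $X_{l,m}\leq X_{l,k}+X_{k,m}$.
\item Stationarity: the edge weights are i.i.d. and the sets are translation invariant, so $\{X_{l+1,m+1}\}$ has the same law as $\{X_{l,m}\}$.
\item Ergodicity: the increments $X_{(k-1),k}$ along disjoint blocks are functionals of the weights in boxes translated by $\bs v$, so the shift is mixing.
\item Integrability: $0\leq X_{0,1}\leq T(\mb A)$ for any fixed $\mb A\in\mc C^\gamma_{\bs v}$, so $\ex X_{0,1}\leq N\ex\tau<\infty$. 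The lower bound $\ex X_{0,m}\geq0$ is trivial.
\end{itemize}
Kingman's theorem then gives
\[
X_{0,m}/m\limas\inf_k\ex X_{0,k}/k .
\]
Dividing by $N$ gives $T(\mc C^\gamma_{kN\bs q})/(kN)\to\mu_{\mfC}(\gamma,\bs q)$ almost surely along $n=kN$.

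\textbf{Step 2 (interpolation).} For general $n$, write $n=kN+r$ with $0\leq r<N$. The endpoints $\lfloor n\bs q\rceil$ and $kN\bs q$ differ by a bounded vector $\bs w_r=\lfloor r\bs q\rceil$; the translation compatibility of $\lfloor\cdot\rceil$ applies because $kN\bs q$ is a lattice point. This difference may be too small to be feasible, so we instead compare with neighbouring block multiples.

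\emph{Upper bound.} Route through $(k-1)\bs v$ and then a bridge path from $\mc C^\gamma_{\bs v+\bs w_r}$. This is nonempty by Remark~\ref{axispathsrmk} provided each nonzero coordinate is $\geq K_\gamma$, which holds where $\bs v$ is nonzero. Where $\bs v$ has a zero coordinate, $\bs q$ does too, and then $\bs w_r$ is zero there. The bridge is a concatenation of at most $d$ fixed axis segments, of total length $|\bs v+\bs w_r|$, and the filtration property yields exactly parameter $\gamma$ because all pieces have parameter $\gamma$. Hence
\[
T(\mc C^\gamma_{n\bs q})\leq X_{0,k-1}+Y_{k},
\]
where $Y_k$ is the passage time of a fixed bridge path translated to $(k-1)\bs v$. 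Since $Y_k$ is a sum of at most $2N$ weights with $\ex\tau^2<\infty$, Borel--Cantelli via Chebyshev gives $Y_k/k\to0$ almost surely. Only finitely many $r$ occur, so this holds simultaneously for all $r$.

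\emph{Lower bound.} Symmetrically, $X_{0,k+2}\leq T(\mc C^\gamma_{n\bs q})+Y'_k$, where $Y'_k$ uses a fixed bridge from $\lfloor n\bs q\rceil$ to $(k+2)\bs v$. This is admissible by translation invariance and the filtration property, and $Y'_k/k\to0$ almost surely by the same argument.

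Combining the two bounds gives the almost sure limit along all $n$.

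\textbf{Main obstacle.} The delicate step is Step 2: it requires exhibiting bridging paths that are genuinely in $\mc C^\gamma$, with the parameter staying exactly $\gamma$ rather than a weighted average with something larger. This is why $N$ includes the factor $K_\gamma$, and it requires care with the coordinates where $\bs q$ vanishes. The second-moment assumption enters only to make $Y_k/k\to0$ summable via Chebyshev. Here $\ex Y_k^2\leq C$, so $\pr(Y_k>\epsilon k)\leq C/(\epsilon k)^2$, which is summable in $k$.
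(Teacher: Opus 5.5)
Your proposal is correct and follows essentially the same route as the paper. Both apply Liggett's subadditive ergodic theorem to $X_{l,m}=T(\mc C^\gamma_{l\bs v,m\bs v})$ with $\bs v=N\bs q$, then interpolate through $(k-1)\bs v$ and $(k+2)\bs v$ using bridges of displacement $\bs v+\lfloor r\bs q\rceil$ and $2\bs v-\lfloor r\bs q\rceil$, controlled by Borel--Cantelli with the second moment. The only differences are cosmetic: you bound the bridges by a fixed path rather than the optimal bridge, and in the lower bound you should state explicitly, as the paper does, that $\bs o\leq\lfloor r\bs q\rceil\leq\bs v$, which is what makes every positive coordinate of $2\bs v-\lfloor r\bs q\rceil$ at least $K_\gamma$.
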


\begin{proof}
Liggett's sub-additive ergodic theorem \cite{Ligg} asserts that if a sequence of non-negative random variables 
$(X_{m,n})_{0\leq m< n}$ satisfies the following conditions
\begin{enumerate}
    \item $X_{0,n}\leq X_{0,m} + X_{m,n}$, for all $0<m<n$,
    \item $\ex[X_{0,1}]<\infty$,
    \item The distributions of the sequences $(X_{m,m+k})_{k\geq 1}$ and \\ $(X_{m+1,m+k+1})_{k\geq 1}$ are the same for all $m\geq 0$,
    \item For each $k\geq 1 $, the sequence $(X_{nk,(n+1)k})_{n\geq 0}$ is stationary and ergodic,
\end{enumerate}
then 
\begin{align}
 \frac{X_{0,n}}{n}  \limas \inf_{n} \frac{\ex[X_{0,n}]}{n}. 
\end{align}
Using the sub-additive ergodic theorem we first show that 
\begin{align}\label{Tcnn}
  \frac{T(\mc C^\gamma_{kN\bs q})}{kN} \limas \inf_{k} \frac{\ex [T(\mc C^\gamma_{kN\bs q})]}{kN}, 
\end{align}
where $N$ is as defined in \eqref{Ndefin}. The proof of \eqref{Tcnn} follows by defining $X_{m,n}(\bs q) \coloneqq T(\mc C^\gamma_{m\bs z,n\bs z})$, with $\bs z=N\bs q\in\zd_{\geq 0}$, and checking the conditions of Liggett's theorem given above. Each positive coordinate of $\bs z$ is at least $K_\gamma$, so Remark~\ref{axispathsrmk} gives $\mc C^\gamma_{\bs z}\ne\varnothing$. Conditions-3 and 4 follow due to the symmetry and the translation invariance of $\mfC$, while Condition-2 follows from
\begin{align}
    \ex[X_{0,1}] = \ex [T(\mc C^\gamma_{\bs z})] \leq \ex[\tau] \min_{\mb C\in \mc C^\gamma_{\bs z}}|\mb C|<\infty.
\end{align}
Furthermore, Condition-1 follows by noting that 
\begin{align}
    X_{0,n}(\bs q) = T(\mc C^\gamma_{n\bs z}) \leq T(\mc C^\gamma_{m\bs z}) + T(\mc C^\gamma_{m\bs z,n\bs z})  = X_{0,m}(\bs q) + X_{m,n}(\bs q)
\end{align}
since 
\begin{align}
    \mc C_{m\bs z}^\gamma \times \mc C^\gamma_{(n-m)\bs z}
    \subseteq
    \mc C_{n\bs z}^\gamma  
\end{align}
due to \eqref{pathcons}, and $\mc C^\gamma_{m\bs z,n\bs z} = m\bs z + \mc C^\gamma_{(n-m)\bs z} $. Now note that, if $n=kN+j$ for $0\leq j\leq N-1$, $k,j\in \mbb Z_{\geq 0}$ and $n$ is large enough so that $k\geq 2$, then $0\leq\lfloor j\bs q\rceil\leq\bs z$. Since $\lfloor n\bs q\rceil=k\bs z+\lfloor j\bs q\rceil$, the two connecting displacements are
\begin{align*}
\lfloor n\bs q\rceil-(k-1)\bs z&=\bs z+\lfloor j\bs q\rceil,\\
(k+2)\bs z-\lfloor n\bs q\rceil&=2\bs z-\lfloor j\bs q\rceil.
\end{align*}
Both displacements are nonzero. When $z_i=0$, both have zero $i$th coordinate. When $z_i>0$, both have $i$th coordinate at least $z_i$, and $z_i\geq K_\gamma$ by the choice of $N$. Remark~\ref{axispathsrmk} therefore supplies permitted paths from $\bs o$ with these displacements. Translating them to start at $(k-1)\bs z$ and $\lfloor n\bs q\rceil$, respectively, shows that $\mc C^\gamma_{(k-1)\bs z,\lfloor n\bs q\rceil}$ and $\mc C^\gamma_{\lfloor n\bs q\rceil,(k+2)\bs z}$ are nonempty. We have
\begin{align}\label{nqkzineq}
    T(\mc C^\gamma_{n\bs q}) - T(\mc C^\gamma_{(k-1)\bs z}) \leq T(\mc C^\gamma_{(k-1)\bs z,  \lfloor n\bs q\rceil }),
\end{align}
where we used again the definitions of translation invariance and filtration. 
Similarly,
\begin{align}
    T(\mc C^\gamma_{(k+2)\bs z}) - T(\mc C^\gamma_{n\bs q}) \leq  T(\mc C^\gamma_{\lfloor n\bs q\rceil,(k+2)\bs z}),
\end{align}
which, together with \eqref{nqkzineq}, implies that 
\begin{align} 
\begin{alignedat}{2}
\frac{\left|T(\mc C^\gamma_{n\bs q}) - T(\mc C^\gamma_{k\bs z})\right|}{n} \leq & \frac{\left|T(\mc C^\gamma_{(k+2)\bs z}) - T(\mc C^\gamma_{k\bs z})  \right|}{n} + \frac{\left|T(\mc C^\gamma_{k\bs z}) - T(\mc C^\gamma_{(k-1)\bs z})  \right|}{n} \\  &+\frac{T(\mc C^\gamma_{\lfloor n\bs q\rceil, (k+2)\bs z})}{n} + \frac{T(\mc C^\gamma_{(k-1)\bs z, \lfloor n\bs q\rceil })}{n}
\end{alignedat}
\end{align}
 Note that,
\begin{align}
\frac{\left|T(\mc C^\gamma_{(k+2)\bs z}) - T(\mc C^\gamma_{(k-1)\bs z})  \right| + \left|T(\mc C^\gamma_{k\bs z}) - T(\mc C^\gamma_{(k-1)\bs z})  \right|}{n}\limas 0
\end{align}
due to \eqref{Tcnn}. Additionally,
\begin{align}
    \frac{T(\mc C^\gamma_{(k-1)\bs z, \lfloor n\bs q\rceil })}{n} \limas 0,
\end{align}
using Borel-Cantelli, since
\begin{align}
    \sum_{k=1}^\infty \pr\left[T(\mc C^\gamma_{(k-1)\bs z, \lfloor n\bs q\rceil })>n\epsilon\right]&\leq \sum_{k=1}^\infty \frac{\ex \left(T(\mc C^\gamma_{(k-1)\bs z, \lfloor n\bs q\rceil })\right)^2}{n^2\epsilon^2}\\
    &\leq \sum_{k=1}^\infty \frac{\ex \left(T(\mc C^\gamma_{(k-1)\bs z, \lfloor n\bs q\rceil})\right)^2}{k^2\epsilon^2} <\infty
\end{align}
for any $\epsilon>0$ and $j\in\{0,\ldots,N-1\}$. The last inequality above follows from
\begin{align}
    \ex \left(T(\mc C^\gamma_{(k-1)\bs z, \lfloor n\bs q\rceil})\right)^2&\leq \ex[\tau^2] \min_{\mb C\in \mc C^\gamma_{\lfloor n\bs q\rceil -(k-1)\bs z}}|\mb C|^2 \\&= \ex[\tau^2] \min_{\mb C\in \mc C^\gamma_{\bs z+\lfloor j\bs q\rceil}}|\mb C|^2  
    <\infty,
\end{align}
due to the homogeneity of edge weights and the nonemptiness of $\mc C^\gamma_{\bs z+\lfloor j\bs q\rceil}$ established above. Similarly,
\begin{align}
    \frac{T(\mc C^\gamma_{\lfloor n\bs q\rceil, (k+2)\bs z })}{n}\limas 0
\end{align}
follows from 
\begin{align*}
    \ex \left(T(\mc C^\gamma_{\lfloor n\bs q\rceil, (k+2)\bs z })\right)^2&\leq \ex[\tau^2] \min_{\mb C\in \mc C^\gamma_{(k+2)\bs z-\lfloor n\bs q\rceil}}|\mb C|^2 \\&= \ex[\tau^2] \min_{\mb C\in \mc C^\gamma_{2\bs z-\lfloor j\bs q\rceil}}|\mb C|^2<\infty.
\end{align*}
\end{proof}

\begin{remark}
Accordingly, from now on we consider the time constant $\mu_{\mfC}(\gamma,\bs q)$ for rational directions $\bs q$ satisfying $|\bs q|=1$.
\end{remark}

\subsection{Convexity}
Now that we have proved the existence of time constant, we next state a convexity statement thereof.
\begin{lemma}\label{convexitylem}
Let $\mfC = \{\mfC^\alpha\}$ be a filtration. Then the time constant  $\mu_{\mfC}(\gamma,\bs q)$, as defined in \eqref{mucgamqdefeqn},  is ``jointly'' convex in $\gamma$ and $\bs q$, i.e., for any $\bs{q}_1, \bs{q}_2\in\Qd_{\geq 0}\setminus \bs o$, $|\bs q_1|=|\bs q_2|=1$, $t_1,t_2\in \mbb Q_{\geq 0}$, $t_1+t_2 = 1$, and $\alpha, \beta\in \mbb R_{\geq 0}$,   
    \begin{align}
        \mu_{\mfC}(t_1\alpha + t_2\beta,t_1 \bs{q}_1 + t_2 \bs{q}_2) \leq t_1\mu_{\mfC}(\alpha,\bs{q}_1) + t_2\mu_{\mfC}(\beta,\bs{q}_2)  
    \end{align}
    for all $\mfC^\gamma$, $\mfC^\alpha$, $\mfC^\beta$ feasible families in $\mfC$.
\end{lemma}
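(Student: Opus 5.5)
The plan is to combine the concatenation property \eqref{pathcons} with the almost-sure limits of Lemma~\ref{subadd}. Write $t_1=a/b$ and $t_2=c/b$ with $a,c\in\mbb Z_{\geq0}$, $b\in\mbb N$ and $a+c=b$. If one of $t_1,t_2$ is zero the claim is trivial, so assume both are positive. Set $\gamma\coloneqq t_1\alpha+t_2\beta$ and $\bs q\coloneqq t_1\bs q_1+t_2\bs q_2$. Then $\bs q\in\Qd_{\geq0}$ and $|\bs q|=1$, since both $\bs q_i$ lie in the nonnegative orthant. Choose $M\in\mbb N$ such that $M t_1\bs q_1$ and $M t_2\bs q_2$ are lattice points. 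For $n$ a multiple of $M$ put $\bs y_n\coloneqq n t_1\bs q_1$ and $\bs w_n\coloneqq n t_2\bs q_2$; both are in $\zd_{\geq0}$, and $\bs y_n+\bs w_n=n\bs q$. Since $|\bs y_n|=nt_1$ and $|\bs w_n|=nt_2$, the filtration property \eqref{pathcons} gives
\[
\mc C^\alpha_{\bs y_n}\times\mc C^\beta_{\bs w_n}\subseteq \mc C^{\gamma'}_{n\bs q},
\qquad
\gamma'=\frac{\alpha nt_1+\beta nt_2}{n}=\gamma .
\]

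Passing to optimal paths, translation invariance then yields the pathwise inequality
\[
T(\mc C^\gamma_{n\bs q})\leq T(\mc C^\alpha_{\bs y_n})+T(\mc C^\beta_{\bs y_n,\,n\bs q}),
\]
where the second term is the passage time of the translated family $\bs y_n+\mc C^\beta_{\bs w_n}$. Taking expectations and using stationarity of the edge weights gives
\[
\ex T(\mc C^\gamma_{n\bs q})\leq \ex T(\mc C^\alpha_{nt_1\bs q_1})+\ex T(\mc C^\beta_{nt_2\bs q_2}).
\]
We will use this expected version, because the second term on the right is not independent of the first and almost-sure limits are awkward to combine.

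It remains to divide by $n$ and let $n\to\infty$ along multiples of a common $L$, chosen divisible by $M$ and by the various $N$ from \eqref{Ndefin}. On the left, Lemma~\ref{subadd} gives $\mu_{\mfC}(\gamma,\bs q)=\inf_k \ex T(\mc C^\gamma_{kN\bs q})/(kN)$. By subadditivity, which is \eqref{pathcons2ineq} iterated together with the concatenation bound, this infimum equals $\lim_k$ along any subsequence of multiples of $N$ (Fekete). So the left side divided by $n$ tends to $\mu_{\mfC}(\gamma,\bs q)$. On the right, $\ex T(\mc C^\alpha_{nt_1\bs q_1})/n=t_1\cdot \ex T(\mc C^\alpha_{m\bs q_1})/m$ with $m=nt_1$. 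Along multiples of the relevant $N$ this tends to $t_1\mu_{\mfC}(\alpha,\bs q_1)$, again by Fekete, and similarly the last term tends to $t_2\mu_{\mfC}(\beta,\bs q_2)$. The proof is finished once the expected-value definition of $\mu$ is known to be a genuine limit, not just an infimum.

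The main obstacle is this last identification. The sequence $\ex T(\mc C^\gamma_{kN\bs q})$ is subadditive in $k$ and nonnegative, so Fekete's lemma gives that $\inf_k=\lim_k$, and hence also the limit along any arithmetic subsequence of $k$. Independence of $\mu$ from the choice of $N$, which arises because the common $L$ is a multiple of each $N$, follows in the same way. A second small point is that the formula for $\gamma'$ requires $\bs y_n+\bs w_n\neq\bs o$, which holds here. Feasibility of the families is only needed so that all the expectations are finite, and Lemma~\ref{subadd} supplies that.
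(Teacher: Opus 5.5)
Your proposal is correct and is essentially the paper's proof. Both take the concatenation inequality from \eqref{pathcons} in expectation, apply Fekete's lemma to the block-scale expected costs, and pass to a common scale along which all three normalized expectations are subsequences of their defining sequences.

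One point needs tightening: the choice of $L$. Being divisible by $M$ and by the various $N$'s is not enough. You need $L/N_{\gamma,\bs q}$, $Lt_1/N_{\alpha,\bs q_1}$ and $Lt_2/N_{\beta,\bs q_2}$ to all be integers, which is exactly the paper's $D$ in \eqref{commonblockscale}. Only then does $m=nt_1$ run through multiples of $N_{\alpha,\bs q_1}$, and likewise for $nt_2$.
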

\begin{proof}
Set
\[
    \gamma\coloneqq t_1\alpha+t_2\beta,
    \qquad
    \bs q\coloneqq t_1\bs q_1+t_2\bs q_2.
\]
If $t_1=0$ or $t_2=0$, the assertion is immediate, so suppose that
$t_1,t_2>0$.  For each feasible parameter--direction pair $(\eta,\bs r)$, write
\[
    N_{\eta,\bs r}
    \coloneqq K_\eta\inf\{m\in\mbb N:m\bs r\in\mbb Z^d_{\geq0}\},
\]
as in \eqref{Ndefin}.  Since $t_1,t_2,\bs q_1,\bs q_2$ are rational, we may
choose $D\in\mbb N$ such that
\begin{align}\label{commonblockscale}
    \frac{D}{N_{\gamma,\bs q}},\qquad
    \frac{Dt_1}{N_{\alpha,\bs q_1}},\qquad
    \frac{Dt_2}{N_{\beta,\bs q_2}}
    \quad\text{are all positive integers}.
\end{align}

We first record the convergence of the normalized expected costs along these
block scales.  For a feasible pair $(\eta,\bs r)$, define
\[
    a_k^{\eta,\bs r}
    \coloneqq \ex[T(\mc C^\eta_{kN_{\eta,\bs r}\bs r})],
    \qquad k\geq1.
\]
The filtration property \eqref{pathcons} and translation invariance imply
\[
    a_{k+\ell}^{\eta,\bs r}
    \leq a_k^{\eta,\bs r}+a_\ell^{\eta,\bs r}.
\]
Hence, by Fekete's lemma and the definition in \eqref{mucgamqdefeqn},
\begin{align}\label{expectedblocklimit}
    \frac{a_k^{\eta,\bs r}}{kN_{\eta,\bs r}}
    \longrightarrow \inf_{\ell\geq1}
    \frac{a_\ell^{\eta,\bs r}}{\ell N_{\eta,\bs r}}
    =\mu_{\mf C}(\eta,\bs r).
\end{align}

For $n\geq1$, apply \eqref{pathcons} with
$\bs y=nDt_1\bs q_1$ and $\bs z=nDt_2\bs q_2$.  This gives
\[
    T(\mc C^\gamma_{nD\bs q})
    \leq T(\mc C^\alpha_{nDt_1\bs q_1})
    +T(\mc C^\beta_{nDt_1\bs q_1,nD\bs q}).
\]
By translation invariance, the last term has the same distribution as
$T(\mc C^\beta_{nDt_2\bs q_2})$.  Taking expectations and dividing by $nD$
therefore yields
\begin{align}\label{commonblockineq}
    \frac{\ex[T(\mc C^\gamma_{nD\bs q})]}{nD}
    \leq t_1\frac{\ex[T(\mc C^\alpha_{nDt_1\bs q_1})]}{nDt_1}
    +t_2\frac{\ex[T(\mc C^\beta_{nDt_2\bs q_2})]}{nDt_2}.
\end{align}

By \eqref{commonblockscale}, each normalized expectation in
\eqref{commonblockineq} is a subsequence of the corresponding sequence in
\eqref{expectedblocklimit}.  Consequently, as $n\to\infty$, the three terms
converge respectively to
\[
    \mu_{\mf C}(\gamma,\bs q),\qquad
    \mu_{\mf C}(\alpha,\bs q_1),\qquad
    \mu_{\mf C}(\beta,\bs q_2).
\]
Taking the limit in \eqref{commonblockineq} gives
\[
    \mu_{\mf C}(\gamma,\bs q)
    \leq t_1\mu_{\mf C}(\alpha,\bs q_1)
    +t_2\mu_{\mf C}(\beta,\bs q_2),
\]
as required.
\end{proof}

\begin{remark}
For a given feasible filtration $\mfC = \{\mfC^\alpha\}$, the restriction of $\mu_{\mf C}$ to
\[
    \mbb R_{>0}\times \{\bs q\in \mbb Q_{>0}^d: |\bs q|=1\}
\]
has a unique continuous extension to
\[
    \mbb R_{>0}\times \{\bs q\in \mbb R_{>0}^d: |\bs q|=1\}.
\]
Continuity follows from the standard local estimate for convex functions, using Lemma~\ref{convexitylem} and the bound $0\leq\mu_{\mf C}\leq\ex\tau$. On this domain, the convexity inequality in Lemma~\ref{convexitylem} also holds for $t_1,t_2\in \mbb R_{\geq 0}$, $t_1+t_2=1$.
\end{remark}

\begin{remark}
    On this domain, taking $\bs{q}_1=\bs{q}_2=\bs q$, we recover the convexity of $\mu_{\mfC}(\gamma,\bs q)$ in $\gamma$, that is,
       \begin{align}\label{convmu}
        \mu_{\mfC}(\gamma,\bs q) \leq t\mu_{\mfC}(\alpha,\bs q) + (1-t)\mu_{\mfC}(\beta,\bs q),
    \end{align}
for all $0<t<1$, where $\gamma = t\alpha + (1-t)\beta$.
    We can also choose $\gamma=\alpha=\beta$ in Lemma~\ref{convexitylem} to obtain the convexity of $\mu_{\mfC}(\gamma,\bs q)$ in $\bs q$, i.e.,
    \begin{align}
        \mu_{\mfC}(\gamma,t\bs q_1+(1-t)\bs{q}_2) \leq t\mu_{\mfC}(\gamma,\bs{q}_1) + (1-t)\mu_{\mfC}(\gamma,\bs{q}_2).
    \end{align}
\end{remark}

\section{Reparameterization by the class size}\label{reparam sec}
\seceq
Up to now we were considering arbitrary (nested) families of paths, parameterized by some auxiliary parameter (having sometimes a clear meaning in terms of the underlying problem, like the number of turns).

\subsection{Entropic Parameterization}
In this section, we consider a rather more natural parameterization of the permissible paths according to {\em the size of the families of a filtration $\mf{C}$}. 

\begin{definition}\label{entropdef}
Let $\mfC = \{\mfC^\alpha\} = \{\mc C^\alpha_{\bs z}\}$ be a translation invariant path class. Define the \emph{inverse entropy function} $K_{\bs z}(\zeta): \mbb R_{\geq 0}\to\mathbb{R}_{\geq 0}\cup\{\infty\}$
    \begin{align}
    \begin{split}
        K_{\bs z}(\zeta) \coloneqq \sup\big\{\gamma\geq 0:\,  \card\big( {\mc C}_{\bs z}^\gamma\big)< e^{\zeta |\bs z|}\big\},
    \end{split}
    \end{align} 
    with the convention $\sup\varnothing=0$, and define the \emph{entropy-parameterized} path set
    \begin{align}
        {\mc H}_{\bs z}^\zeta \coloneqq {\mc C}_{\bs z}^{K_{\bs z}(\zeta)},
    \end{align}
     where ${\mc C}_{\bs z}^\infty\coloneqq\cup_{\alpha\geq 0} {\mc C}_{\bs z}^\alpha$. Sets ${\mc H}_{\bs y,\bs z}^\zeta$ for $\bs y\neq \bs o$ are defined so that the class $\mf H = \{{\mf H}^\zeta\} = \{{\mc H}^\zeta_{\bs y, \bs z}\}$ is translation invariant. 
     
As before, we denote the optimal path weights in the entropy-defined class $\mf H$ as $T({\mc H}_{\bs z}^\zeta)$. The definition is extended to real end points as in \eqref{rationalendptseqn}.
\end{definition}

In this section, our main result is the existence and the continuity (in $\zeta$ and $\bs q$) of the entropy-parametrized time constant
\begin{align}
   \mu_{{\mf H} }(\zeta,\bs q)\coloneqq \lim_{n\to\infty} \frac{T(\mc{H}^\zeta_{n\bs q})}{n},
\end{align}
provided the construction of the path class $\mf C$ satisfies some mild assumptions. Note that it is straightforward to verify that $\mf H$ is \emph{not} necessarily a filtration. Therefore our results in the previous section for filtrations are not applicable.

\subsection{Analytic Combinatorics in Several Variables}
The statements (and the proofs) of our results will involve some notions from the area of \emph{Analytic Combinatorics in Several Variables} (ACSV), which we will introduce next. (For an in-depth treatment of this field, the reader is referred to \cites{acsvbook,melczerbook}.)

The toolbox of  ACSV is geared to derive the asymptotics of the coefficients of a generating function from its analytic properties, in particular from its singularities. In the case when the generating function is rational, this can be done quite efficiently in a great variety of situations.

We need to introduce several notions first.

\begin{definition}
    The generating function $F_{\mf C}$ of the path class ${\mf C}$ is defined as
    \begin{align}
        F_{\mf C}(x, \bs z) &\coloneqq \sum_{k\in \mbb Z_{\geq 0},\,\bs r\in \mbb Z^d_{\geq 0}\setminus\{\bs o\}} \card({\mc C}_{\bs r}^{k/|\bs r|}) x^k \bs z^{\bs r}\label{countingseries}
    \end{align}
with the short-hand notation
\begin{align}
    \bs z^{\bs r} = z_1^{r_1}z_2^{r_2}\ldots z_d^{r_d}.
\end{align}
\end{definition}

As we will see, quite often this generating function is {\em rational}, i.e., can be represented as a ratio of two polynomials. We make it a standing

\begin{assumption*}
The generating function $F_{\mf C}$ can be represented as
\begin{align}
        F_{\mf C}(x, \bs z) = \frac{P(x,\bs z)}{ Q(x,\bs z)},
\end{align}
where $P,Q$ are coprime polynomials and $Q(0,\ldots,0)\neq0$.
\end{assumption*}
In particular, $F_{\mf C}$ has an open domain of absolute convergence $\mathsf D\subseteq\mbb C^{d+1}$ containing the origin where its Taylor expansion coincides with \eqref{countingseries}.

We will need the notion of {\em amoeba}:
\begin{definition}[Amoeba]
    Let $\sing_Q=\lbrace Q=0\rbrace$ be the variety defined by the polynomial $Q$. Define $\relog:\mathbb{C}_*^{d+1}\to \mathbb{R}^{d+1}$ (here $\mathbb{C}_*=\mathbb{C}-\lbrace 0\rbrace$) as
    \[
    \relog(x,\bs z)\coloneqq (\log|x|,\log|z_1|,\ldots,\log|z_d|).
    \]
    Then the \emph{amoeba} of a polynomial $Q(x,\bs z)$ is the image of $\sing_Q\cap \mathbb{C}_*^{d+1}$ under $\relog$:
    \begin{align}
        \amoeba(Q):=\relog(\sing_Q \cap \mathbb{C}_*^{d+1}).
    \end{align}

\end{definition}

The complement to amoeba $\mathbb{R}^{d+1}\setminus \amoeba(Q)$ is a disjoint union of open convex components. Different components correspond to different (converging) expansions of the rational function $P/Q$ into Laurent series (see Chapter 6 of \cite{acsvbook}). 

If $Q$ does not vanish at the origin, there is a unique component of the complement $\mathbb{R}^{d+1}\setminus \amoeba(Q)$ containing $(-M,\ldots,-M)$ for all sufficiently large $M$. We will denote it by $\mathsf B_Q$. We note that Laurent expansions corresponding to this component are the traditional expansions into Taylor series (with only coefficients with nonnegative indices).

\begin{definition}[Minimal Point]\label{defminpoint}
    The $(d+1)$-vector $(x,\bs z)\in(\mbb C_*)^{d+1}$ is called a \emph{minimal point} of $F_{\mf C}$ if $(x,\bs z)\in \partial \mathsf{D}\cap \sing_Q$ (Definition~6.34 of \cite{acsvbook}).
On nonzero-coordinate points, the domain of convergence satisfies
\[
    \mathsf D\cap(\mathbb C_*)^{d+1}=\relog^{-1}(\mathsf B_Q)
\]
(Theorems~6.4, 6.18 and Lemma~6.31 of \cite{acsvbook}).
\end{definition}

\begin{proposition}\label{minimaleqvconds}
For $F_{\mf C}$ with non-negative coefficients, fix $(x,\bs z)\in(\mbb C_*)^{d+1}$ satisfying $Q(x,\bs z)=0$. Then the following are equivalent.
    \begin{enumerate}
        \item $(x,\bs z)\in \mbb{C}^{d+1}$ is a \emph{minimal point} of $F_{\mf C}$.
        \item 
        $(|x|,|z_1|, \ldots,|z_d|)\in \mbb{R}^{d+1}$ is a \emph{minimal point} of $F_{\mf C}$.
        \item 
        $Q(t|x|,t|z_1|, \ldots,t|z_d|)\neq 0$ for all $0<t<1.$
    \end{enumerate} 
\end{proposition}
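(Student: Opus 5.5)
The plan is to work entirely in logarithmic coordinates, using the description $\mathsf D\cap(\mbb C_*)^{d+1}=\relog^{-1}(\mathsf B_Q)$ from Definition~\ref{defminpoint}. Write $\bs w=(x,\bs z)$ and $|\bs w|=(|x|,|z_1|,\ldots,|z_d|)$. Since $\relog(\bs w)=\relog(|\bs w|)$, the point $\bs w$ lies in $\mathsf D$ exactly when $|\bs w|$ does. The same holds for the closure, because $\relog^{-1}$ of the closed set $\overline{\mathsf B_Q}$ is closed, and $\relog$ is continuous and open on $(\mbb C_*)^{d+1}$. Hence $\bs w\in\partial\mathsf D$ if and only if $|\bs w|\in\partial\mathsf D$, both being equivalent to $\relog(\bs w)\in\partial\mathsf B_Q$.

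\textbf{(1)$\Leftrightarrow$(2).} We already have $Q(\bs w)=0$, so it remains to show $Q(|\bs w|)=0$ whenever $|\bs w|\in\partial\mathsf D$, and conversely. Here the nonnegativity of the coefficients is used. For any $\bs u\in\mathsf D$, the triangle inequality gives $\sum |c_{\bs r}\bs u^{\bs r}|=F_{\mf C}(|\bs u|)$, so on the real positive part $F_{\mf C}$ is the maximal modulus. Suppose $|\bs w|\in\partial\mathsf D$ but $Q(|\bs w|)\neq0$. Then $F_{\mf C}=P/Q$ is analytic in a neighborhood of the real point $|\bs w|$. A power series with nonnegative coefficients that extends analytically past a positive real boundary point of its convergence domain converges in a slightly larger polydisc, by a Pringsheim-type argument applied along the ray $t|\bs w|$. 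Concretely, the one-variable series $g(t)=F_{\mf C}(t|\bs w|)=\sum_k b_k t^k$ has $b_k\ge0$. By Pringsheim's theorem, if $g$ is analytic at $t=1$ its radius of convergence exceeds $1$. Then $F_{\mf C}$ converges absolutely at $(1+\epsilon)|\bs w|$, so $|\bs w|$ is interior to $\mathsf D$ by log-convexity and the Reinhardt property, which is a contradiction. So $Q(|\bs w|)=0$ and $|\bs w|$ is minimal. Conversely, if $|\bs w|$ is minimal, then $\bs w\in\partial\mathsf D$ by the first paragraph, and $Q(\bs w)=0$ is given.

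\textbf{(2)$\Leftrightarrow$(3).} Consider the ray $\ell(t)=\relog(t|\bs w|)=\relog(\bs w)+(\log t)\mathbf 1$. Because $\mathsf B_Q$ contains $(-M,\ldots,-M)$ for large $M$ and is convex, the points $t|\bs w|$ lie in $\mathsf D$ for small $t$. If (2) holds, $\relog(|\bs w|)\in\partial\mathsf B_Q$. Convexity of $\mathsf B_Q$, together with the fact that it contains all points $\bs p-s\mathbf 1$ for $\bs p\in\mathsf B_Q$ and $s\geq0$ (Taylor domains are closed downward in each coordinate), puts the whole open segment $\ell(t)$, $0<t<1$, in $\mathsf B_Q$, which is disjoint from the amoeba. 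Hence $Q(t|\bs w|)\neq0$. Conversely, assume (3). Let $t_0=\sup\{t:\ t|\bs w|\in\mathsf D\}$. Then $t_0|\bs w|\in\partial\mathsf D$, and by the Pringsheim step above $Q(t_0|\bs w|)=0$. Since $Q(t|\bs w|)\neq0$ for $t<1$ and $Q(|\bs w|)=0$, we get $t_0\ge1$. And $t_0\le1$ because $|\bs w|\in\sing_Q$ lies on the amoeba, hence outside $\mathsf B_Q$. So $t_0=1$ and (2) holds.

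The main obstacle is the Pringsheim step, namely passing from analyticity of $P/Q$ at a real boundary point to convergence of the multivariate series beyond it. I would handle it by restricting to the diagonal ray as above and invoking the classical one-variable Pringsheim theorem. I would then use the monotonicity of nonnegative series to transfer convergence at $(1+\epsilon)|\bs w|$ to a full polydisc neighbourhood.
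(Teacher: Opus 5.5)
Your argument is correct in substance, and it follows essentially the route behind the results the paper cites in place of a proof: Proposition~6.38 (multivariate Vivanti--Pringsheim) and Corollary~6.39 of the ACSV book. You transfer boundary membership between $\bs w$ and $|\bs w|$ via $\relog(\bs w)=\relog(|\bs w|)$. You then apply the one-variable Pringsheim theorem to $t\mapsto F_{\mf C}(t|\bs w|)$ to force $Q=0$ at positive real boundary points, and you use convexity and downward closure of $\mathsf B_Q$ along the diagonal ray. The paper gives no argument beyond the citation, so your write-up is a self-contained version of the same approach.

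There is one slip in (3)$\Rightarrow$(2). You write ``since \dots\ $Q(|\bs w|)=0$'' and ``$|\bs w|\in\sing_Q$''. The hypothesis is only $Q(\bs w)=0$, and $Q(|\bs w|)=0$ is half of what (2) asserts, so as written this step assumes part of the conclusion. It is not actually needed:
\begin{itemize}
\item $t_0\le1$ holds because $\relog(|\bs w|)=\relog(\bs w)\in\amoeba(Q)$. Hence $|\bs w|\notin\mathsf D$, and by downward closure $t|\bs w|\notin\mathsf D$ for all $t\ge1$.
\item $t_0\ge1$ follows from (3) together with $Q(t_0|\bs w|)=0$ alone, the latter coming from your Pringsheim step.
\item Once $t_0=1$, you obtain $|\bs w|\in\partial\mathsf D$ and $Q(|\bs w|)=0$ as conclusions rather than inputs.
\end{itemize}
You should also justify $t_0|\bs w|\in\partial\mathsf D$ in one line. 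By downward closure, the set $\{t>0: t|\bs w|\in\mathsf D\}$ is an interval. Because $\mathsf D$ is open, this interval is open, so it equals $(0,t_0)$ and $t_0|\bs w|$ is a limit of points of $\mathsf D$ not itself in $\mathsf D$.
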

\begin{proof}
    See Proposition~6.38 (multivariate Vivanti-Pringsheim) and Corollary~6.39 of \cite{acsvbook}. 
\end{proof}
\begin{definition}\label{defsmoothpathclass}
We say that $\mf C$ is a \emph{nice} path class if its generating function $F_{\mf C}$ satisfies the following conditions:
\begin{enumerate}
    \item $F_{\mf C}$ is rational. Moreover, $Q$ is \emph{square-free}, i.e., it is equal to the product of its distinct irreducible factors.

\item For each $\bs q\in \mbb Q_{> 0}^d$, $|\bs q|=1$, there exists an open interval $\Gamma_{\bs q} \subseteq\mbb R_{>0}$, such that for all $\gamma \in \Gamma_{\bs q}$ there exists
    \begin{align}
        (u,\bs v)=(u, v_1,\ldots,v_d)\in  \mbb R_{> 0}^{d+1},
    \end{align}
    satisfying the following conditions:

\begin{enumerate}
    \item 
    $(u,\bs v)$ is a \emph{smooth critical point}
    in the direction $(\gamma, \bs q)$, i.e.,   
    \begin{align}\label{smooth cond2 eq}
        \hspace{5em}\frac{u}{\gamma}\frac{\partial Q}{\partial x} (u,\bs v) = \frac{v_1}{q_1} \frac{\partial Q}{\partial z_1} (u,\bs v) =\ldots = \frac{v_d}{q_d} \frac{\partial Q}{\partial z_d} (u,\bs v),
    \end{align}
    with $\frac{\partial Q}{\partial x} (u,\bs v) \neq 0$.
    \item $(u,\bs v)$ is a minimal point 
     of $F_{\mf C}$.
\end{enumerate}    
     
\end{enumerate}

\end{definition}
Note that in the above definition, $(u,\bs v)$ implicitly depends on $(\gamma,\bs q)$, which we omit from the notation to avoid clutter.

\subsection{Entropy--Cost Tradeoff}
\begin{theorem}\label{reparamconv}
    Assume that $\mf C$ is a nice nested feasible filtration. Let $\mf H$ be its entropy-parametrized class.  
    Then, for all $\bs q\in\mbb Q_{> 0}^d$, $|\bs q|=1$, and $\zeta\in g(\Gamma_{\bs q},\bs q)$, with $g$ as in Lemma~\ref{limlem}, the limit 
    \begin{align}
    \frac{T(\mc{H}^\zeta_{n\bs q})}{n} \limas \mu_{\mf H}(\zeta,\bs q)
    \end{align}
exists. Furthermore, the entropy-parametrized time constant $\mu_{{\mf H} }(\zeta,\bs q)$ is jointly continuous in $\zeta$ and $\bs q$ on this domain with its relative topology.  

\end{theorem}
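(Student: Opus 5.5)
The plan is to reduce $\mf H$ to $\mf C$ through the change of parameter supplied by the path-count asymptotics, and then use monotonicity in the constraint parameter together with continuity of $\mu_{\mf C}$. By Lemma~\ref{limlem}, for nested feasible filtrations the growth rate
\[
g(\gamma,\bs q)=\lim_{n\to\infty}\frac{1}{n}\log\card(\mc C^\gamma_{n\bs q})
\]
exists and is jointly continuous and concave. Under the niceness assumptions (Definition~\ref{defsmoothpathclass}) it is given by evaluation at the positive minimal smooth critical point $(u,\bs v)$, namely $g(\gamma,\bs q)=-\gamma\log u-\sum_i q_i\log v_i$.

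\textbf{Step 1: strict monotonicity of $g$ in $\gamma$.} The first step is to show that $\gamma\mapsto g(\gamma,\bs q)$ is strictly increasing on $\Gamma_{\bs q}$. By the envelope (Legendre duality) structure, $\partial_\gamma g=-\log u$. Since $(u,\bs v)$ is minimal and $Q(0)\neq0$, one can hope to show $u<1$, because $\card$ counts grow in $\gamma$ and the smooth critical point has $\partial Q/\partial x\ne 0$. If that fails, concavity combined with nestedness (which forces $g$ nondecreasing) gives strict increase on the open interval $\Gamma_{\bs q}$, except on flat pieces, and those pieces would contradict the nondegenerate critical point.

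Strict monotonicity gives an inverse $\gamma(\zeta,\bs q)$ with $g(\gamma(\zeta,\bs q),\bs q)=\zeta$. This inverse is jointly continuous in $(\zeta,\bs q)$ by continuity of $g$ together with strict monotonicity, via the implicit-function or monotone-inverse argument.

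\textbf{Step 2: sandwich and almost-sure convergence.} Next, fix $\zeta=g(\gamma_0,\bs q)$ with $\gamma_0\in\Gamma_{\bs q}$ and $\epsilon>0$ small, with $\gamma_0\pm\epsilon\in\Gamma_{\bs q}$. Since $g(\gamma_0-\epsilon,\bs q)<\zeta<g(\gamma_0+\epsilon,\bs q)$, for all large $n$ we have $\card\mc C^{\gamma_0-\epsilon}_{n\bs q}<e^{\zeta n}\le\card\mc C^{\gamma_0+\epsilon}_{n\bs q}$ (here $|n\bs q|\approx n$, with rounding absorbed). By the definition of $K$ and nestedness this yields
\[
\gamma_0-\epsilon\le K_{n\bs q}(\zeta)\le\gamma_0+\epsilon,
\]
and hence $\mc C^{\gamma_0-\epsilon}_{n\bs q}\subseteq\mc H^\zeta_{n\bs q}\subseteq\mc C^{\gamma_0+\epsilon}_{n\bs q}$. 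Therefore
\[
T(\mc C^{\gamma_0+\epsilon}_{n\bs q})\le T(\mc H^\zeta_{n\bs q})\le T(\mc C^{\gamma_0-\epsilon}_{n\bs q}).
\]
Dividing by $n$ and applying Lemma~\ref{subadd}, the liminf and limsup are almost surely between $\mu_{\mf C}(\gamma_0+\epsilon,\bs q)$ and $\mu_{\mf C}(\gamma_0-\epsilon,\bs q)$. Intersecting over rational $\epsilon$ and using continuity of the convex function $\mu_{\mf C}(\cdot,\bs q)$ (the remark after Lemma~\ref{convexitylem}) gives almost-sure convergence to $\mu_{\mf H}(\zeta,\bs q)=\mu_{\mf C}(\gamma(\zeta,\bs q),\bs q)$.

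\textbf{Step 3: joint continuity.} Joint continuity then follows by composing the jointly continuous $\mu_{\mf C}$ (continuous extension on $\mbb R_{>0}\times\{\bs q>0\}$) with the jointly continuous inverse $\gamma(\zeta,\bs q)$.

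\textbf{Main obstacle.} I expect the hardest step to be the strict monotonicity and the uniform control needed to pass from the asymptotic count $g$ to the finite-$n$ inequalities on $K_{n\bs q}(\zeta)$. In particular, I need $\gamma_0\pm\epsilon$ to stay inside $\Gamma_{\bs q}$, which is open, and I must handle the floor in $\lfloor|\bs z|\gamma\rfloor$ and the rounding $\lfloor n\bs q\rceil$. For strictness, my first attempt will be the critical-point identity $\partial_\gamma g=-\log u$ together with $u<1$, the latter derived from minimality and the fact that $\card\mc C^\gamma$ strictly increases in $\gamma$ within $\Gamma_{\bs q}$.
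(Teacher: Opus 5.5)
Your architecture is the paper's: strict monotonicity of $g(\cdot,\bs q)$ on $\Gamma_{\bs q}$, a jointly continuous monotone inverse, the sandwich $\mc C^{\gamma_0-\epsilon}_{n\bs q}\subseteq\mc H^\zeta_{n\bs q}\subseteq\mc C^{\gamma_0+\epsilon}_{n\bs q}$ combined with Lemma~\ref{subadd} and continuity of $\mu_{\mf C}(\cdot,\bs q)$, and then composition. Steps~2 and~3 are fine. The gap is Step~1. It is the only place where niceness is used, and you leave it open.

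Your primary route does not work as written. The identity $\partial_\gamma g=-\log u$ presupposes that $g$ is differentiable in $\gamma$, i.e.\ that the minimizing boundary point is unique, and you have not established this. What the end of the proof of Lemma~\ref{limlem} does give is weaker: $-\log u_\gamma$ is a supergradient of $g(\cdot,\bs q)$ at $\gamma$. That would suffice if you knew $u_\gamma<1$. Minimality, however, only yields $u_\gamma\le 1$, because nestedness and feasibility force $\mathsf B_Q\subseteq\{b_0<0\}$. Appealing to the growth in $\gamma$ of the finite-$n$ counts $\card\mc C^\gamma_{\bs z}$ is circular. Increasing counts do not force their exponential rates to differ, and strict increase of the rate is exactly what Step~1 must prove.

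The missing idea is geometric. At $\bs b_\gamma=\log(u_\gamma,\bs v_\gamma)$ the boundary $\partial\mathsf B_Q$ coincides locally with the smooth hypersurface $\{\psi=0\}$ (Lemma~\ref{Bsmooth}). That lemma needs $\nabla\psi(\bs b_\gamma)\neq0$ together with the Vivanti--Pringsheim characterization of minimal points, which rules out other amoeba points nearby. Consequently the convex set $\bar{\mathsf B}_Q$ has a \emph{unique} supporting hyperplane at $\bs b_\gamma$.

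The paper (Lemma~\ref{strictentropy}) uses this as follows. Suppose $g(\cdot,\bs q)$ were affine on $[\gamma_1,\gamma_2]\subset\Gamma_{\bs q}$, and set $\gamma_*=\lambda\gamma_1+(1-\lambda)\gamma_2$. Then $\bs b_{\gamma_*}$ would also minimize $-(\gamma_i,\bs q)\cdot\bs b$ over $\bar{\mathsf B}_Q$ for $i=1,2$, giving two distinct supporting hyperplanes at $\bs b_{\gamma_*}$. Hence $g(\cdot,\bs q)$ is strictly concave on $\Gamma_{\bs q}$. Combined with the nondecreasing property from nestedness, it is strictly increasing.

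Your remark that flat pieces ``would contradict the nondegenerate critical point'' points in this direction. But $Q_x\neq0$ by itself yields no contradiction; what you need is the unique-supporting-hyperplane consequence of smoothness of $\partial\mathsf B_Q$. The same fact also repairs your $u<1$ route. If $u_\gamma=1$, the hyperplane $\{b_0=0\}$ would support $\bar{\mathsf B}_Q$ at $\bs b_\gamma$. It differs from the tangent hyperplane with normal $(\gamma,\bs q)$ because $\bs q$ has positive coordinates, contradicting uniqueness.
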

Before the proof of Theorem above, we give some auxiliary lemmas as follows.

\begin{lemma}\label{Bsmooth}
    Assume $\mfC$ is a nice filtration with a point $(u,\bs v)\in\mathbb{R}_{>0}^{d+1}$ that satisfies the conditions-(2) of Definition~\ref{defsmoothpathclass} for a given $(\gamma,\bs q)\in\mathbb{R}_{>0}\times \mathbb{Q}_{>0}^{d}$, $|\bs q|=1$, $\gamma\in\Gamma_{\bs q}$. Then 
    $\partial \mathsf B_Q$ is a     smooth manifold in a neighborhood of $(\log u, \log \bs v)$. 
\end{lemma}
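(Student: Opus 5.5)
Our plan is to show that near $\bs w\coloneqq(\log u,\log\bs v)$ the boundary $\partial\mathsf B_Q$ coincides with the image under $\relog$ of the real positive zero set of $Q$. We then show that this image is a smooth graph over the $(\log|z_1|,\ldots,\log|z_d|)$ coordinates.

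\textbf{Step 1: a local real hypersurface.} Consider the function
\[
\phi(s,\bs t)\coloneqq Q(e^{s},e^{t_1},\ldots,e^{t_d})
\]
on $\mbb R^{d+1}$. It is real-analytic and vanishes at $\bs w$. Its $s$-derivative there is $u\,\partial Q/\partial x(u,\bs v)\neq0$, by condition (2a). The implicit function theorem therefore gives a neighborhood $U=I\times W$ of $\bs w$ and a real-analytic function $h:W\to I$ with
\[
\{\phi=0\}\cap U=\{(h(\bs t),\bs t):\bs t\in W\}.
\]
Shrinking $U$, we may also assume that $\phi$ is strictly monotone in $s$ on $U$. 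The aim is to prove $\partial\mathsf B_Q\cap U=\{\phi=0\}\cap U$, which is a smooth graph.

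\textbf{Step 2: the two inclusions.} Since $(u,\bs v)$ is a minimal point, $\bs w\in\partial\mathsf B_Q$. By Definition~\ref{defminpoint} and Proposition~\ref{minimaleqvconds}, for positive real points $\mathsf B_Q$ is the image of the positive part of $\mathsf D$. Because $F_{\mf C}$ has nonnegative coefficients, $\relog(\mathsf D)$ is a down-closed convex set: if $(s,\bs t)\in\relog(\mathsf D)$ and $(s',\bs t')\leq(s,\bs t)$, then $(s',\bs t')\in\relog(\mathsf D)$.

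\emph{Boundary points lie on the zero set.} Every boundary point $(s,\bs t)\in\partial\mathsf B_Q\cap U$ has a real positive lift $(e^s,e^{\bs t})$. On the boundary of the convergence domain, the series with nonnegative coefficients diverges at this lift, while $P/Q$ is analytic off $\sing_Q$. Since $P$ and $Q$ are coprime, $Q$ must vanish there. This is the multivariate Pringsheim argument, as in Proposition~6.38 of \cite{acsvbook}. So $\partial\mathsf B_Q\cap U\subseteq\{\phi=0\}$.

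\emph{Zero-set points lie on the boundary.} Fix $\bs t\in W$. The set of $s\in I$ with $(s,\bs t)\in\mathsf B_Q$ is an interval that is unbounded below within $I$, by down-closedness. Its upper endpoint must be a zero of $\phi(\cdot,\bs t)$, because boundary points are zeros. By monotonicity that zero is unique and equals $h(\bs t)$, provided the fiber meets the interval $I$ at all.

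To guarantee this, use convexity of $\mathsf B_Q$ and openness. Points slightly below $\bs w$ in every coordinate lie in $\mathsf B_Q$, since $\bs w\in\partial\mathsf B_Q$ and the set is down-closed and open. Points with $s$ slightly above $h(\bs t)$ are excluded, since $\phi$ has only the single zero in $I$ and $\mathsf B_Q$ avoids the amoeba. Together these give $\partial\mathsf B_Q\cap U=\operatorname{graph}(h)$ after shrinking $W$.

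\textbf{Main obstacle.} The delicate point is the fiberwise argument in Step 2, specifically the claim that every $\bs t\in W$ actually has its boundary point inside $I$. This requires that $\partial\mathsf B_Q$ be locally a graph over $\bs t$, rather than, say, leaving $U$ through its top or bottom face.

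We would handle this using convexity. $\partial\mathsf B_Q$ is the boundary of a convex down-closed set, so it is locally the graph of a concave function $s=\sigma(\bs t)$. That function is continuous on the interior of its domain, and $\sigma(\bs t_0)=\log u$ at the base point $\bs t_0\coloneqq\log\bs v$. Continuity of $\sigma$ keeps $\sigma(\bs t)\in I$ for $\bs t$ near $\bs t_0$. Then the inclusion $\partial\mathsf B_Q\cap U\subseteq\{\phi=0\}$ forces $\sigma=h$ on a neighborhood of $\bs t_0$, so $\partial\mathsf B_Q$ is smooth, indeed real-analytic, near $(\log u,\log\bs v)$.

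One point must still be checked: that $\bs t_0$ lies in the interior of the domain of $\sigma$. This follows because the fibers are unbounded below, so every nearby $\bs t$ has some $s$ with $(s,\bs t)\in\mathsf B_Q$.
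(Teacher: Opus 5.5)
Your first inclusion, $\partial\mathsf B_Q\cap U\subseteq\{\phi=0\}$, is the same as the paper's. The paper takes a zero of $Q$ on the torus over a boundary point, which exists because $\partial\mathsf B_Q$ lies in the amoeba. It notes that this zero is minimal and moves it to the positive real point with Proposition~\ref{minimaleqvconds}. Your intermediate sentence that the series diverges at the lift is not what does the work, but the Pringsheim argument you cite gives the same conclusion.

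The reverse inclusion is where you depart from the paper, and there is a genuine gap exactly at the point you flagged. You need every $\bs t$ near $\bs t_0$ to admit some $s\in I$ with $(s,\bs t)\in\mathsf B_Q$, and neither justification you give supplies this. Knowing $\bs w-\epsilon\bs 1\in\mathsf B_Q$ only guarantees nonempty fibers over $\bs t$ below or near $\bs t_0-\epsilon\bs 1$. That region need not contain $\bs t_0$ itself, nor any $\bs t$ with a coordinate exceeding that of $\bs t_0$. The statement ``the fibers are unbounded below'' says nothing about which fibers are nonempty.

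Openness, convexity and down-closedness cannot suffice on their own. The half-space $\{(s,\bs t):t_1<0\}$ has all three properties and has $(0,\ldots,0)$ on its boundary. Yet its projection to the $\bs t$-coordinates is $\{t_1<0\}$, which misses $(0,\ldots,0)$. Its boundary there contains vertical lines, so it is not a graph $s=\sigma(\bs t)$. So the claim that $\partial\mathsf B_Q$ is locally the graph of a finite concave $\sigma$ near $\bs t_0$ is precisely what must be proved, and it requires input from $Q$.

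The repair uses your first inclusion.
\begin{itemize}
\item The closure of a down-closed set is down-closed, so every $(s,\bs t_0)$ with $s\in I$, $s<\log u$, lies in $\overline{\mathsf B_Q}$.
\item By strict monotonicity such a point is not a zero of $\phi$, hence not in $\partial\mathsf B_Q$, hence it lies in $\mathsf B_Q$.
\item Openness then gives fibers meeting $I$ over all $\bs t$ in a neighborhood $W'\subseteq W$ of $\bs t_0$.
\end{itemize}
Your fiber argument then closes. For such $\bs t$ the fiber is $I\cap(-\infty,\sigma)$ with $\sigma\leq h(\bs t)$, since $(h(\bs t),\bs t)$ lies in the amoeba. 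So $(\sigma,\bs t)\in\partial\mathsf B_Q\cap U$, whence $\phi(\sigma,\bs t)=0$ and $\sigma=h(\bs t)$. This gives $\partial\mathsf B_Q\cap(I\times W')=\operatorname{graph}(h|_{W'})$ with no concave-function theory needed. It also exhibits the boundary directly as the implicit-function graph.

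The paper sidesteps the projection issue with a connectedness argument. In a small convex neighborhood $\mathsf U$, the sets $\mathsf U\cap\{\psi>0\}$ and $\mathsf U\cap\{\psi<0\}$ (with $\psi$ your $\phi$) are connected. The convex set $\mathsf B_Q\cap\mathsf U$ lies in one of them. That side is connected, free of boundary points, and meets $\mathsf B_Q$, so it is entirely contained in $\mathsf B_Q$. Hence every nearby zero of $\psi$ is a boundary point.
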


\begin{proof}
Define the function $\psi:\mathbb{R}^{d+1}\to \mathbb{R}$
   \begin{align}
       \psi(x,\bs z) \coloneqq Q(e^x,e^{\bs z})\coloneqq Q(e^x,e^{z_1}, \ldots,e^{z_d}).
   \end{align}
Partial derivatives evaluated at $(\log u, \log \bs v)$ give
\begin{align}
    \frac{\partial \psi}{\partial x} (\log u, \log \bs v) &= u \frac{\partial Q}{\partial x}(u,\bs v), \\
    \frac{\partial \psi}{\partial z_i} (\log u, \log \bs v) &= v_i \frac{\partial Q}{\partial z_i}(u,\bs v) \quad \text{for $1\leq i \leq d$}, 
\end{align}
   which are all non-zero due to  Assumption \eqref{smooth cond2 eq}. Therefore, $\nabla\psi (\log u, \log \bs v)$ is non-zero.
We prove that \emph{locally}, $\partial \mathsf B_Q = \psi^{-1}(0)$, which has two steps. First, we show $\partial \mathsf B_Q\cap \mathsf U \subseteq \psi^{-1}(0)\cap \mathsf U $ for a neighborhood $\mathsf U$ of $ (\log u, \log \bs v)$. Take a point $(\log \tilde u,\log \tilde{\bs v})\in \partial \mathsf B_Q\cap \mathsf U$ for some $(\tilde u, \tilde{\bs v})\in\mathbb{R}_{>0}^{d+1}$. Since $\mathsf B_Q$ is open $(\log \tilde u,\log \tilde{\bs v})\notin \mathsf B_Q$. Moreover, $(\log \tilde u,\log \tilde{\bs v})$ cannot be in another connected component of $\mathbb{R}^{d+1}\setminus \amoeba(Q)$ since the connected components or $\mathbb{R}^{d+1}\setminus \amoeba(Q)$ are disjoint. Therefore
$(\log \tilde u,\log \tilde{\bs v})\in\amoeba(Q)$, which means that 
\begin{align}
 Q(\tilde u e^{i\theta_0},\tilde v_1 e^{i\theta_1}, \ldots,\tilde v_d e^{i\theta_d}) = 0
\end{align}
for some $\theta_0, \theta_1,\ldots,\theta_d\in [0,2\pi)$. Assume that there exists some $\tilde\theta_0, \tilde\theta_1,\ldots,\tilde\theta_d\in [0,2\pi)$ not all zero such that $Q(\tilde u e^{i\tilde\theta_0},\tilde v_1 e^{i\tilde\theta_1}, \ldots,\tilde v_d e^{i\tilde\theta_d})=0$, but $Q(\tilde u, \tilde{\bs v})\neq 0$. Since $(\log \tilde u,\log \tilde{\bs v})\in\partial \mathsf B_Q$ and $Q(\tilde u e^{i\tilde\theta_0},\tilde{v_1}e^{i\tilde\theta_1}, \ldots,\tilde{v_d}e^{i\tilde\theta_d})=0$, $(\tilde u e^{i\tilde\theta_0},\tilde{v_1}e^{i\tilde\theta_1}, \ldots,\tilde{v_d}e^{i\tilde\theta_d})$ is a minimal point of $F_{\mathfrak C}$.
Due to Proposition~\ref{minimaleqvconds}-(2),
$(\tilde u, \tilde{\bs v})$ has to be a minimal point too, so $Q(\tilde u, \tilde{\bs v})=0$ by Definition~\ref{defminpoint}, contradicting $Q(\tilde u, \tilde{\bs v})\neq 0$.  

   To prove the reverse implication, i.e., 
   $\partial \mathsf B_Q\cap \mathsf U \supseteq \psi^{-1}(0)\cap \mathsf U$, take $(\log \tilde u,\log \tilde{\bs v}) \in \psi^{-1}(0)\cap \mathsf U$ for some $\mathsf U$ small enough such that $\nabla\psi (\log \tilde u,\log \tilde{\bs v})\neq 0$. Such $\mathsf U$ exists since $\nabla\psi (\log u, \log \bs v)$ is non-zero. Define the sets
   \begin{align}
       \mathsf U_+ &\coloneqq \mathsf U \cap \{(x,\bs z): \psi(x,\bs z)>0 \},\\
       \mathsf U_- &\coloneqq \mathsf U \cap \{(x,\bs z): \psi(x,\bs z)<0 \}.
   \end{align}
By the implicit function theorem, we may choose $\mathsf U$ convex and sufficiently small that $\mathsf U_+$ and $\mathsf U_-$ are connected.
Note that $(\log \tilde u,\log \tilde{\bs v})\notin \mathsf B_Q$ since $\psi(\log \tilde u,\log \tilde{\bs v})=0$. We will show that $(\log \tilde u,\log \tilde{\bs v})\in \partial \mathsf B_Q$ by proving that every neighborhood of $(\log \tilde u,\log \tilde{\bs v})$ includes a point $(\log \hat u,\log \hat{\bs v})$ in $\mathsf B_Q$. Previously, we had proven that $\partial \mathsf B_Q\cap \mathsf U \subseteq \psi^{-1}(0)\cap \mathsf U $, which gives $\partial \mathsf B_Q\cap \mathsf U_{+}=\varnothing$, and $\partial \mathsf B_Q\cap \mathsf U_{-}=\varnothing$. Moreover, 
\begin{align}
  \mathsf B_Q\cap \mathsf U \subseteq \mathsf U_{+} \cup \mathsf U_{-}.   
\end{align}
   Since $\mathsf B_Q\cap\mathsf U$ is convex, hence connected, and $\mathsf U_{+}$ and $\mathsf U_{-}$ are disjoint open sets, either $\mathsf B_Q\cap \mathsf U\subseteq \mathsf U_{+}$ or $\mathsf B_Q\cap \mathsf U\subseteq \mathsf U_{-}$. Assume, without loss of generality, that $\mathsf B_Q\cap \mathsf U\subseteq \mathsf U_{+}$. Now, using $\partial \mathsf B_Q\cap \mathsf U_{+}=\varnothing$, we write the disjoint union
   \begin{align}
       \mathsf U_{+} = (\mathsf U_{+}\cap \mathsf B_Q) \sqcup (\mathsf U_{+}\cap (\mathbb{R}^{d+1}\setminus (\mathsf B_Q\cup \partial \mathsf B_Q)))
   \end{align}
   We note that both $\mathsf U_{+}\cap \mathsf B_Q$ and $\mathsf U_{+}\cap (\mathbb{R}^{d+1}\setminus (\mathsf B_Q\cup \partial \mathsf B_Q))$ are open. Since $\mathsf U_{+}$ is connected, either $\mathsf U_{+}\cap \mathsf B_Q=\varnothing$ or $\mathsf U_{+}\cap (\mathbb{R}^{d+1}\setminus (\mathsf B_Q\cup \partial \mathsf B_Q))=\varnothing$. Using the assumption $\mathsf B_Q\cap \mathsf U\subseteq \mathsf U_{+}$, we obtain $\mathsf U_{+}\cap \mathsf B_Q  \supseteq \mathsf B_Q\cap \mathsf U \neq \varnothing$ because $(\log u, \log \bs v)\in \partial \mathsf B_Q$. Therefore $\mathsf U_{+}\cap (\mathbb{R}^{d+1}\setminus (\mathsf B_Q\cup \partial \mathsf B_Q))=\varnothing$, which, together with $\partial \mathsf B_Q\cap \mathsf U_{+}=\varnothing$, gives $\mathsf U_{+}\subseteq \mathsf B_Q$. Using 
   $\nabla\psi (\log \tilde u,\log \tilde{\bs v})\neq 0$ and $\psi (\log \tilde u,\log \tilde{\bs v})= 0$, we conclude that every neighborhood of $(\log \tilde u,\log \tilde{\bs v})$ includes a point in $\mathsf U_{+}$, and therefore, in $\mathsf B_Q$. This concludes the proof that $\partial \mathsf B_Q\cap \mathsf U = \psi^{-1}(0)\cap \mathsf U$ for a sufficiently small neighborhood $\mathsf U$ of $(\log u, \log\bs v)$. That 
    $\partial \mathsf B_Q$ is a     smooth manifold near $(\log u, \log \bs v)$ follows from the application of the implicit function theorem to the function $\psi$. 
\end{proof}

\begin{lemma}\label{limlem}
Let $\mf C$ be a nested feasible filtration. There is a finite, continuous, concave, one-homogeneous function
\[
g:\mbb R_{>0}^{d+1}\longrightarrow\mbb R_{\geq0}
\]
such that, for every $\bs q\in\mbb Q_{>0}^d$ with $|\bs q|=1$ and every $\gamma>0$, writing $\bs r_n=\lfloor n\bs q\rceil$,
\begin{align}\label{intrinsicentropy}
\lim_{n\to\infty}\frac{1}{n}\log\card(\mc C_{\bs r_n}^{\gamma})
=g(\gamma,\bs q).
\end{align}
If, in addition, $F_{\mf C}$ is rational, then for every $\gamma>0$ and $\bs q\in\mbb R_{>0}^{d}$,
\begin{align}\label{entropyduality}
g(\gamma,\bs q)=\inf_{\bs b\in\mathsf B_Q}(- (\gamma,\bs q)\cdot\bs b).
\end{align}
If, in addition, $\mf C$ is nice and $\gamma\in\Gamma_{\bs q}$, any positive critical point $(u_\gamma,\bs v_\gamma)$ satisfying Definition~\ref{defsmoothpathclass}(2) evaluates the rate:
\begin{align}\label{entropyheight}
g(\gamma,\bs q)=h(\gamma,\bs q)
\coloneqq-\gamma\log u_\gamma-\sum_{i=1}^dq_i\log v_{\gamma,i}.
\end{align}
\end{lemma}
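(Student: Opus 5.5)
I will prove the three assertions of Lemma~\ref{limlem} in order: existence of the rate by supermultiplicativity, the duality formula \eqref{entropyduality} by a Legendre-type argument on the amoeba complement, and the evaluation \eqref{entropyheight} by a supporting-hyperplane argument at a smooth minimal critical point.

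\textbf{Existence.} Set $c(\gamma,\bs r)\coloneqq\log\card(\mc C^\gamma_{\bs r})$, with $\log 0=-\infty$. The filtration property \eqref{pathcons} gives an injection $\mc C^\alpha_{\bs z}\times\mc C^\beta_{\bs y}\hookrightarrow\mc C^{\gamma}_{\bs z+\bs y}$. It is injective because $\bs z$ is fixed, so the split point of the concatenation is determined. Hence
\[
c\Bigl(\tfrac{\alpha|\bs z|+\beta|\bs y|}{|\bs z+\bs y|},\bs z+\bs y\Bigr)\geq c(\alpha,\bs z)+c(\beta,\bs y).
\]
The homogenized function $\tilde c(\gamma|\bs r|,\bs r)\coloneqq c(\gamma,\bs r)$ is therefore superadditive on the cone $\{(s,\bs r)\}$.

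For fixed rational $\bs q$ and the block $\bs z=N\bs q$ of Lemma~\ref{subadd}, feasibility makes $c(\gamma,k\bs z)$ finite. Fekete's lemma then gives the limit along multiples of $\bs z$, and the limit is bounded above by $\log d$ since there are at most $d^{|\bs r|}$ paths. For the remaining indices $\bs r_n$ I sandwich them between block multiples, as in the proof of Lemma~\ref{subadd}:
\[
\mc C^{\gamma'}_{(k-1)\bs z}\times\mc C^{\gamma''}_{\bs z+\lfloor j\bs q\rceil}\subseteq \mc C^{\gamma}_{\bs r_n}
\quad\text{and}\quad
\mc C^{\gamma}_{\bs r_n}\times\mc C^{\gamma'''}_{2\bs z-\lfloor j\bs q\rceil}\subseteq \mc C^{\gamma^*}_{(k+2)\bs z}.
\]
The parameters $\gamma',\gamma^*$ differ from $\gamma$ by $O(1/k)$, because they are length-weighted averages with bounded connector lengths. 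Nestedness lets me compare parameters monotonically.

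The main obstacle is the continuity in $\gamma$ needed to absorb this $O(1/k)$ shift. I plan to get it from concavity. Superadditivity and one-homogeneity make the block limit $g$ concave on rational points of the cone, and monotonicity in $\gamma$ comes from nestedness. A concave, finite, monotone function on the open cone is continuous, and one checks it is finite and nonnegative there using feasibility. So $g$ extends uniquely to a continuous concave one-homogeneous function on $\mbb R^{d+1}_{>0}$. The $\pm O(1/k)$ parameter shifts then vanish in the limit via $g(\gamma\pm\epsilon,\bs q)\to g(\gamma,\bs q)$, which gives \eqref{intrinsicentropy}.

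\textbf{Duality \eqref{entropyduality}.} The coefficient of $x^k\bs z^{\bs r}$ in $F_{\mf C}$ is $\card(\mc C^{k/|\bs r|}_{\bs r})$. For the upper bound, Cauchy's estimate on the torus $\relog^{-1}(\bs b)$ with $\bs b\in\mathsf B_Q$ (convergent, nonnegative coefficients) gives $\card\leq F(e^{\bs b})e^{-(k,\bs r)\cdot\bs b}$. Taking $k=\lfloor\gamma|\bs r_n|\rfloor$ and noting $\mc C^\gamma_{\bs r}=\mc C^{k/|\bs r|}_{\bs r}$ up to floor conventions (handled by continuity in $\gamma$), I get $g\leq\inf_{\bs b}(-(\gamma,\bs q)\cdot\bs b)$.

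For the lower bound, suppose $g(\gamma_0,\bs q_0)<\inf_{\bs b}(-(\gamma_0,\bs q_0)\cdot\bs b)$. Concavity of $g$ yields a linear functional $\bs b^*$ with $g(\bs w)\leq -\bs w\cdot\bs b^*$ for all $\bs w$ and $\bs b^*\notin\overline{\mathsf B_Q}$. The series $\sum\card\, e^{(k,\bs r)\cdot(\bs b^*+\epsilon')}$ then converges at a point $\bs b'$ outside $\overline{\mathsf B_Q}$. Because the coefficients are nonnegative, the log-domain of convergence is a convex set containing $\mathsf B_Q$. Making this rigorous requires uniformity of the subexponential error over directions, which I would obtain from the uniform continuity of $g$ on compact subcones and a superadditive upper-bound argument. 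Together with $\mathsf D\cap\mbb C_*^{d+1}=\relog^{-1}(\mathsf B_Q)$ this gives a contradiction.

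\textbf{Evaluation at the critical point.} At a positive minimal point, $\bs b_0=(\log u,\log\bs v)\in\partial\mathsf B_Q$. By Lemma~\ref{Bsmooth}, $\partial\mathsf B_Q$ is smooth there, with normal $\nabla\psi$. Condition \eqref{smooth cond2 eq} states that this normal is parallel to $(\gamma,\bs q)$. Since $\mathsf B_Q$ is convex, the tangent hyperplane at $\bs b_0$ supports it, so the linear function $-(\gamma,\bs q)\cdot\bs b$ attains its infimum over $\overline{\mathsf B_Q}$ at $\bs b_0$. The sign is right because $\mathsf B_Q$ contains $(-M,\dots,-M)$, so it extends toward the direction in which $-(\gamma,\bs q)\cdot\bs b$ increases. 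This value is exactly $h(\gamma,\bs q)$, which proves \eqref{entropyheight}.
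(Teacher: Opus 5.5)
Your existence argument, the upper half of \eqref{entropyduality}, and the supporting-hyperplane evaluation at the critical point all follow the paper's route. In the existence sandwich, the $O(1/k)$ parameter shifts are unnecessary. If the block and both connectors use the same parameter $\eta$, the length-weighted average in \eqref{pathcons} is exactly $\eta$, so $\card(\mc C^\eta_{\bs r_n^-})\le\card(\mc C^\eta_{\bs r_n})\le\card(\mc C^\eta_{\bs r_n^+})$ holds directly. Continuity of $g$ is then needed only to pass from rational to real $\gamma$ by nestedness.

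The genuine gap is the lower half of \eqref{entropyduality}. After you choose the supergradient $\bs b^*$, everything depends on showing that $\sum_{\bs T}A_{\bs T}e^{\bs T\cdot\bs b}$, with $A_{\bs T}=[x^k\bs z^{\bs r}]F_{\mf C}$, converges on a neighborhood of $\bs b^*-\epsilon\bs 1$. That requires a bound on \emph{every} coefficient. You defer this to uniform control of subexponential errors, obtained from uniform continuity of $g$ on compact subcones. That tool cannot reach the infinitely many indices $\bs T=(k,\bs r)$ on the boundary of the cone, such as $k=0$ or $\bs r$ with a zero coordinate (the axis paths). On those indices $g$ is not even defined, yet they have nonzero coefficients and do contribute to the series.

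The missing idea is that supermultiplicativity gives an exact coefficient bound with no error term. Write $\bs b^*=-\bs p$, where $\bs p$ is a supergradient at a rational $\bs R_0>0$; one-homogeneity forces $\bs p\cdot\bs R_0=g(\bs R_0)$.

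1. For any integral $\bs T$ with $A_{\bs T}>0$, concatenation gives $A_{mN(\bs R_0+\bs T)}\ge A_{mN\bs R_0}(A_{\bs T})^{mN}$. Hence $g(\bs R_0+\bs T)\ge g(\bs R_0)+\log A_{\bs T}$.
2. Since $\bs R_0+\bs T$ lies in the open cone, the supergradient inequality applies there: $g(\bs R_0+\bs T)\le g(\bs R_0)+\bs p\cdot\bs T$.
3. Together these give $A_{\bs T}\le e^{\bs p\cdot\bs T}$ for all $\bs T$, boundary indices included.
4. On the box $\{|b_i+p_i+\delta|<\delta/2\}$ the series is then dominated by $\sum_{\bs T}e^{-(\delta/2)|\bs T|_1}$. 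This puts $-\bs p-\delta\bs 1$ in $\mathsf B_Q$.
5. It follows that $\inf_{\bs b\in\mathsf B_Q}(-\bs R_0\cdot\bs b)\le g(\bs R_0)+\delta|\bs R_0|_1$ directly, with no contradiction argument and no uniformity over directions. Equality then extends from rational to real $(\gamma,\bs q)$ by continuity of both sides.

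Even in the interior no uniformity is needed. For integral $\bs T>0$, Fekete's lemma gives $\log A_{\bs T}\le g(\bs T)$ exactly, because the ray limit of a superadditive sequence equals its supremum.
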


\begin{proof}
For $\bs R=(k,\bs r)\in\mbb Z_{\geq0}^{d+1}$, let $A_{\bs R}=[x^k\bs z^{\bs r}]F_{\mf C}$. The filtration property and the unique split of a directed path after $|\bs r|$ steps give
\begin{align}\label{crossrayconcat}
A_{\bs R+\bs T}\geq A_{\bs R}A_{\bs T}
\end{align}
for indices with nonzero endpoints. Fix an integral $\bs R=(k,\bs r)$ with all coordinates positive. Feasibility makes $A_{m\bs R}>0$ for all sufficiently large $m$. Also $A_{m\bs R}\leq d^{m|\bs r|}$, since a directed path to $m\bs r$ has $m|\bs r|$ steps, each chosen from $d$ coordinate directions. Thus Fekete's lemma applied to $\log A_{m\bs R}$ gives the finite ray limit
\[
g(\bs R)\coloneqq\lim_{m\to\infty}m^{-1}\log A_{m\bs R},
\qquad 0\leq g(\bs R)\leq|\bs r|\log d.
\]
For $\bs R\in\mbb Q_{>0}^{d+1}$, set
\[
N\coloneqq\min\{n\in\mbb Z_{\geq1}:n\bs R\in\mbb Z^{d+1}\},
\qquad g(\bs R)\coloneqq\frac{g(N\bs R)}{N}.
\]
For integral $\bs R$ and every positive integer $j$, the subsequence $mj$ of the defining ray limit gives
\[
g(j\bs R)=j\lim_{m\to\infty}\frac{\log A_{mj\bs R}}{mj}=jg(\bs R).
\]
For rational $\bs R$, let $N_1,N_2$ be positive integers such that $N_1\bs R$ and $N_2\bs R$ are integral. Integer homogeneity gives
\[
\frac{g(N_1\bs R)}{N_1}
=\frac{g(N_1N_2\bs R)}{N_1N_2}
=\frac{g(N_2\bs R)}{N_2}.
\]
Thus the definition does not depend on the integer used to make $\bs R$ integral. For $t=J/K$ with positive integers $J,K$, choose $N$ with $N\bs R$ integral. Since $KN(t\bs R)=JN\bs R$ is integral, the same definition and integer homogeneity give
\[
g(t\bs R)=\frac{g(JN\bs R)}{KN}
=\frac{Jg(N\bs R)}{KN}=tg(\bs R).
\]
For positive rational vectors $\bs R,\bs T$, choose $N$ so that both $N\bs R$ and $N\bs T$ are integral. Apply \eqref{crossrayconcat} to $mN\bs R$ and $mN\bs T$, divide its logarithm by $mN$, and let $m\to\infty$ to obtain $g(\bs R+\bs T)\geq g(\bs R)+g(\bs T)$. Combining this inequality with rational homogeneity gives, for rational $0<t<1$,
\[
g(t\bs R+(1-t)\bs T)\geq tg(\bs R)+(1-t)g(\bs T).
\]
By this concavity and the local bound above, the standard continuity theorem for concave functions gives a unique continuous extension of $g$ to $\mbb R_{>0}^{d+1}$, preserving concavity and one-homogeneity.

We next identify this extension with the path-count limit in \eqref{intrinsicentropy}. Fix rational $\eta>0$ and $\bs q\in\mbb Q_{>0}^d$ with $|\bs q|=1$, and set
\[
N\coloneqq K_\eta\min\{n\in\mbb Z_{\geq1}:(n\eta,n\bs q)\in\mbb Z^{d+1}\},
\]
where $K_\eta$ is chosen as in Remark~\ref{axispathsrmk}, so that $\mc C^\eta_{m\bs e_i}\ne\varnothing$ for all integers $m\geq K_\eta$ and $i=1,\ldots,d$.
By \eqref{countingseries} and $|jN\bs q|=jN$,
\[
A_{(jN\eta,jN\bs q)}=\card(\mc C_{jN\bs q}^{\eta}).
\]
The definitions of $g$ on integer and rational vectors therefore give
\[
\lim_{j\to\infty}\frac{1}{jN}\log\card(\mc C_{jN\bs q}^{\eta})
=\frac{g(N\eta,N\bs q)}{N}=g(\eta,\bs q).
\]
We now pass from the subsequence $jN$ to all positive integers $n$. Recall that $\bs r_n=\lfloor n\bs q\rceil$ is the nearest lattice point to $n\bs q$, with the tie-breaking convention in \eqref{rationalendptseqn}. For $n\geq2N$, set
\[
\bs r_n^-=(\lfloor n/N\rfloor-1)N\bs q,
\qquad
\bs r_n^+=(\lceil n/N\rceil+1)N\bs q.
\]
The two vectors $\lfloor n/N\rfloor N\bs q$ and $\lceil n/N\rceil N\bs q$ are integral and lie on either side of $n\bs q$ coordinatewise. Nearest-integer rounding in each coordinate therefore gives
\[
\bs r_n^-+N\bs q\leq\bs r_n\leq\bs r_n^+-N\bs q.
\]
Thus both displacements $\bs r_n-\bs r_n^-$ and $\bs r_n^+-\bs r_n$ are coordinatewise at least $N\bs q$, whose coordinates are all at least $K_\eta$ by the choice of $N$. Remark~\ref{axispathsrmk} supplies an $\eta$-permissible path from $\bs r_n^-$ to $\bs r_n$ and one from $\bs r_n$ to $\bs r_n^+$. Appending these fixed paths gives injections and hence
\[
\card(\mc C^\eta_{\bs r_n^-})
\leq\card(\mc C^\eta_{\bs r_n})
\leq\card(\mc C^\eta_{\bs r_n^+}).
\]
Since $|\bs r_n^\pm|/n\to1$, the logarithms of both outer counts, divided by $n$, converge to $g(\eta,\bs q)$. Squeezing the middle logarithm proves \eqref{intrinsicentropy} for rational $\eta$. For real $\gamma>0$, choose rational $\eta^-<\gamma<\eta^+$ converging to $\gamma$. Nestedness sandwiches the counts at $\gamma$ between those at $\eta^\pm$; continuity of $g$ gives \eqref{intrinsicentropy}.

For $\bs b\in\mathsf B_Q$, the convergent series $M_{\bs b}=\sum_{\bs T}A_{\bs T}e^{\bs T\cdot\bs b}$ bounds each term, so $A_{mN\bs R}e^{mN\bs R\cdot\bs b}\leq M_{\bs b}$ for rational $\bs R$ and an integral multiple $N\bs R$. Taking logarithms and limits yields $g(\bs R)\leq-\bs R\cdot\bs b$. This inequality extends by continuity to every positive real $\bs R$.

Conversely, fix an arbitrary $\bs R_0\in\mbb Q_{>0}^{d+1}$ and choose a supergradient $\bs p$ of the finite concave function $g$ at $\bs R_0$:
\[
g(\bs V)\leq g(\bs R_0)+\bs p\cdot(\bs V-\bs R_0)\qquad(\bs V>0).
\]
Taking $\bs V=2\bs R_0$ and $\bs V=\bs R_0/2$ in this inequality and using one-homogeneity gives, respectively,
\[
\begin{aligned}
2g(\bs R_0)&\leq g(\bs R_0)+\bs p\cdot\bs R_0,\\
\tfrac12g(\bs R_0)&\leq g(\bs R_0)-\tfrac12\bs p\cdot\bs R_0.
\end{aligned}
\]
Subtracting $g(\bs R_0)$ from the first inequality gives $g(\bs R_0)\leq\bs p\cdot\bs R_0$. Multiplying the second inequality by $2$ and rearranging gives
\[
g(\bs R_0)\leq 2g(\bs R_0)-\bs p\cdot\bs R_0
\quad\Longrightarrow\quad
\bs p\cdot\bs R_0\leq g(\bs R_0).
\]
The two opposite inequalities therefore give $\bs p\cdot\bs R_0=g(\bs R_0)$.

For any integral index $\bs T$ with $A_{\bs T}>0$, take a positive integer $N$ clearing the denominators of $\bs R_0$, so that $N\bs R_0$ is integral. Concatenating one path counted by $A_{mN\bs R_0}$ with $mN$ paths counted by $A_{\bs T}$ gives, by repeated application of \eqref{crossrayconcat},
\[
A_{mN(\bs R_0+\bs T)}\geq A_{mN\bs R_0}\,(A_{\bs T})^{mN}.
\]
For all sufficiently large $m$, feasibility gives $A_{mN\bs R_0}>0$, so taking logarithms and dividing by $mN$ yields
\[
\frac{\log A_{mN(\bs R_0+\bs T)}}{mN}
\geq\frac{\log A_{mN\bs R_0}}{mN}+\log A_{\bs T}.
\]
By the definition of $g$, the two normalized logarithms converge to $g(N(\bs R_0+\bs T))/N=g(\bs R_0+\bs T)$ and $g(N\bs R_0)/N=g(\bs R_0)$, respectively. Hence
\[
g(\bs R_0+\bs T)\geq g(\bs R_0)+\log A_{\bs T}.
\]
On the other hand, substituting $\bs V=\bs R_0+\bs T$ in the supergradient inequality gives
\[
g(\bs R_0+\bs T)\leq g(\bs R_0)+\bs p\cdot\bs T.
\]
Combining these bounds gives $\log A_{\bs T}\leq\bs p\cdot\bs T$.

Fix $\delta>0$. If $|b_i+p_i+\delta|<\delta/2$ for every coordinate $i$, then $b_i+p_i<-\delta/2$, so
\[
\sum_{\bs T}A_{\bs T}e^{\bs T\cdot\bs b}
\leq\sum_{\bs T\in\mbb Z_{\geq0}^{d+1}}e^{-(\delta/2)|\bs T|_1}
=\bigl(1-e^{-\delta/2}\bigr)^{-(d+1)}<\infty.
\]
Thus the series converges throughout an open box about $-\bs p-\delta\bs1$, so this point lies in $\mathsf B_Q$, the logarithmic image of the open convergence domain (Definition~\ref{defminpoint}). Therefore
\[
\inf_{\bs b\in\mathsf B_Q}(-\bs R_0\cdot\bs b)
\leq g(\bs R_0)+\delta|\bs R_0|_1.
\]
Let $\delta\downarrow0$. Together with the first inequality this proves \eqref{entropyduality} for rational $\bs R_0$. Both sides are finite concave functions on the open positive cone and hence continuous, so equality extends to real $\bs R_0$.

Finally, if $\mf C$ is nice, minimality of $\bs w_\gamma=(u_\gamma,\bs v_\gamma)$ puts $\bs b_\gamma=\log\bs w_\gamma$ on $\partial\mathsf B_Q$. Write $\psi(\bs b)=Q(e^{b_0},\ldots,e^{b_d})$ for $\bs b=(b_0,\ldots,b_d)$, as in the proof of Lemma~\ref{Bsmooth}. That proof identifies the boundary near $\bs b_\gamma$ with the smooth level set $\psi=0$. The chain rule and the critical equation \eqref{smooth cond2 eq} give
\[
\begin{aligned}
\nabla\psi(\bs b_\gamma)
&=\bigl(u_\gamma Q_x(\bs w_\gamma),v_{\gamma,1}Q_{z_1}(\bs w_\gamma),\ldots,v_{\gamma,d}Q_{z_d}(\bs w_\gamma)\bigr)\\
&=\frac{u_\gamma Q_x(\bs w_\gamma)}{\gamma}\,(\gamma,\bs q)\neq0.
\end{aligned}
\]
Here \eqref{smooth cond2 eq} gives $v_{\gamma,i}Q_{z_i}(\bs w_\gamma)=q_i u_\gamma Q_x(\bs w_\gamma)/\gamma$ for $1\leq i\leq d$, and $Q_x(\bs w_\gamma)\neq0$. Differentiating $\psi=0$ along any smooth curve on the boundary shows that its tangent vector is perpendicular to $\nabla\psi(\bs b_\gamma)$ at $\bs b_\gamma$. Thus $(\gamma,\bs q)$ is also normal to the boundary, and the tangent hyperplane is
\[
\{\bs b:(\gamma,\bs q)\cdot(\bs b-\bs b_\gamma)=0\}.
\]
Since $\mathsf B_Q$ is convex, this tangent hyperplane supports it: the entire domain lies on one side of the hyperplane. Downward closure gives $\bs b_\gamma-\varepsilon\bs1\in\mathsf B_Q$ for every $\varepsilon>0$; since $(\gamma,\bs q)$ has positive coordinates, that side is
\[
(\gamma,\bs q)\cdot(\bs b-\bs b_\gamma)\leq0
\qquad(\bs b\in\overline{\mathsf B}_Q).
\]
The interior points $\bs b_\gamma-\varepsilon\bs1$ approach $\bs b_\gamma$ as $\varepsilon\downarrow0$. Hence the supporting inequality and \eqref{entropyduality} give
\[
\begin{aligned}
g(\gamma,\bs q)
&=\inf_{\bs b\in\mathsf B_Q}[-(\gamma,\bs q)\cdot\bs b]
=-(\gamma,\bs q)\cdot\bs b_\gamma\\
&=-\gamma\log u_\gamma-\sum_{i=1}^d q_i\log v_{\gamma,i}
=h(\gamma,\bs q),
\end{aligned}
\]
proving \eqref{entropyheight}.
\end{proof}

\begin{lemma}\label{strictentropy}
    If $\mf C$ is a nice nested feasible filtration, then the function $g(\gamma,\bs q)$, as defined in Lemma~\ref{limlem}, is a strictly concave, strictly increasing function of $\gamma\in\Gamma_{\bs q}$. 
\end{lemma}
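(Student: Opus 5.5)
The plan is to work entirely with the dual description of $g$ from Lemma~\ref{limlem}. By \eqref{entropyduality}, $g(\cdot,\bs q)$ is the infimum over $\bs b\in\mathsf B_Q$ of the affine functions $\gamma\mapsto-\gamma b_0-\bs q\cdot(b_1,\ldots,b_d)$, so it is concave. For $\gamma\in\Gamma_{\bs q}$, the last part of the proof of Lemma~\ref{limlem} shows that the infimum is attained at $\bs b_\gamma=\log(u_\gamma,\bs v_\gamma)\in\partial\mathsf B_Q$.

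\textbf{Supergradient in $\gamma$.} For any $\gamma'>0$ we have
\[
g(\gamma',\bs q)\leq-(\gamma',\bs q)\cdot\bs b_\gamma=g(\gamma,\bs q)-(\gamma'-\gamma)\log u_\gamma .
\]
Thus $-\log u_\gamma$ is a supergradient of $g(\cdot,\bs q)$ at $\gamma$.

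\textbf{Strict monotonicity.} It suffices to show $u_\gamma<1$. Then for $\gamma_1<\gamma_2$ in $\Gamma_{\bs q}$, the supergradient at $\gamma_2$ gives
\[
g(\gamma_1,\bs q)\leq g(\gamma_2,\bs q)+(\gamma_2-\gamma_1)\log u_{\gamma_2}<g(\gamma_2,\bs q).
\]
To get $u_\gamma<1$, I would show that $\mathsf B_Q\subseteq\{b_0<0\}$.
\begin{itemize}
\item Fix any $\bs r\neq\bs o$ with $\mc C^\infty_{\bs r}\neq\varnothing$.
\item By nestedness, $\card(\mc C^{k/|\bs r|}_{\bs r})$ is nondecreasing in $k$ and eventually positive.
\item Hence infinitely many coefficients $A_{(k,\bs r)}$ are $\geq1$.
\item So $\sum_k A_{(k,\bs r)}e^{kb_0}$ diverges whenever $b_0\geq0$, and such a $\bs b$ cannot lie in the convergence domain.
\end{itemize}
Since $\mathsf B_Q$ is open and $\bs b_\gamma\in\overline{\mathsf B}_Q$, this gives $\log u_\gamma\leq0$. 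To rule out $\log u_\gamma=0$, note that $\{b_0<0\}\supseteq\mathsf B_Q$ makes $\{b_0=0\}$ a supporting hyperplane at $\bs b_\gamma$. By Lemma~\ref{Bsmooth} the boundary is smooth there with normal $(\gamma,\bs q)$, which is not parallel to $(1,\bs o)$ because $\bs q>0$. So $b_{\gamma,0}<0$.

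\textbf{Strict concavity.} Suppose instead that $g(\cdot,\bs q)$ is affine, say $s\gamma+c$, on a subinterval $[\gamma_1,\gamma_2]\subseteq\Gamma_{\bs q}$ with $\gamma_1<\gamma_2$.
\begin{itemize}
\item At interior points the superdifferential of a concave function that is affine there is $\{s\}$. Hence $-\log u_\gamma=s$ for all $\gamma\in(\gamma_1,\gamma_2)$.
\item Then \eqref{entropyheight} gives $\bs q\cdot\log\bs v_\gamma=-c$, constant in $\gamma$.
\item Take $\gamma\neq\gamma'$ in the open interval. Then $-(\gamma,\bs q)\cdot\bs b_{\gamma'}=s\gamma+c=g(\gamma,\bs q)$.
\end{itemize}
So $\bs b_{\gamma'}$ also attains the infimum for direction $(\gamma,\bs q)$. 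This means the hyperplane with normal $(\gamma,\bs q)$ supports the convex set $\mathsf B_Q$ at $\bs b_{\gamma'}$. But by Lemma~\ref{Bsmooth} the boundary is a smooth hypersurface near $\bs b_{\gamma'}$, with normal $(\gamma',\bs q)$ computed as in the proof of Lemma~\ref{limlem}. A convex set with smooth boundary at a point has a unique supporting hyperplane there. Since $(\gamma,\bs q)$ and $(\gamma',\bs q)$ are not parallel ($\bs q$ is fixed and $\gamma\neq\gamma'$), this is a contradiction.

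\textbf{Expected difficulty.} The step needing the most care is the uniqueness of the supporting hyperplane at a smooth boundary point of the convex set $\mathsf B_Q$. I would prove it directly: if a hyperplane with normal $\bs n$ supports $\mathsf B_Q$ at $\bs b$, then $\bs n$ must be orthogonal to every tangent vector of the smooth level set $\psi=0$ near $\bs b$ (otherwise the hypersurface, and hence nearby interior points, would cross the hyperplane). Therefore $\bs n$ is parallel to $\nabla\psi(\bs b)$. The same argument handles the exclusion of $u_\gamma=1$.
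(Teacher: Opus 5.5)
Your proof is correct. The strict-concavity step uses the same mechanism as the paper: one point of $\partial\mathsf B_Q$ would carry two distinct supporting hyperplanes, with normals $(\gamma,\bs q)$ and $(\gamma',\bs q)$, which Lemma~\ref{Bsmooth} rules out. The paper reaches this more directly. It assumes equality in the concavity inequality at $\gamma_*=\lambda\gamma_1+(1-\lambda)\gamma_2$ and observes that the two trivial inequalities evaluated at $\bs b_{\gamma_*}$ must both be equalities. Hence $\bs b_{\gamma_*}$ is optimal for both $\gamma_1$ and $\gamma_2$. You reach the analogous conclusion at $\bs b_{\gamma'}$ via affinity on an interval, uniqueness of the supergradient, and \eqref{entropyheight}; this is equivalent but longer.

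Where you genuinely diverge is monotonicity. The paper notes that $g(\cdot,\bs q)$ is non-decreasing because, by nestedness, the counts in \eqref{intrinsicentropy} are non-decreasing in $\gamma$. Strict concavity then rules out $g(a,\bs q)=g(b,\bs q)$ for $a<b$, since $g$ would have to be constant on $[a,b]$. You instead prove $\mathsf B_Q\subseteq\{b_0<0\}$, using that the coefficients $A_{(k,\bs r)}$ are eventually $\geq 1$, and apply smoothness once more to get $u_\gamma<1$. This costs an extra analytic step, but it buys more. It exhibits $-\log u_\gamma>0$ as an explicit supergradient of $g(\cdot,\bs q)$, a quantitative positive marginal entropy gain per unit of constraint. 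It also makes monotonicity independent of strict concavity.

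If you only need the lemma, excluding $u_\gamma=1$ is redundant: $\log u_\gamma\le 0$ already makes $g(\cdot,\bs q)$ non-decreasing via your supergradient inequality, and strict concavity then upgrades this to strict increase.
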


\begin{proof}
By Lemma~\ref{limlem}, the minimum is achieved at a positive critical point, i.e., 
\begin{align}
    g(\gamma,\bs q) &= \min_{(\bar{u},\bar{\bs v})\in \bar {\mathsf B}_Q}  (- \gamma\bar u    
  -  \bs q\cdot  \bar{\bs v} )  = -\max_{(\bar{u},\bar{\bs v})\in \bar {\mathsf B}_Q}  (\gamma\bar u +    
   \bs q\cdot  \bar{\bs v} ),
\end{align}
where $\bar {\mathsf B}_Q$ is the closure of the set $\mathsf B_Q$.
 Therefore, $g(\gamma,\bs q)$ is a concave function of $\gamma$ when $\gamma\in\Gamma_{\bs q}$. We will indeed prove that it is \emph{strictly} concave. Assume there exists $\gamma_1,\gamma_2\in \Gamma_{\bs q}$, $\gamma_1\neq \gamma_2$ and $0<\lambda<1$ such that
\begin{equation*}
 \max_{(\bar{u},\bar{\bs v})\in \bar{\mathsf B}_Q} ( \lambda\gamma_1\bar u + (1-\lambda)\gamma_2\bar u+    
   \bs q\cdot  \bar{\bs v}) = \lambda\max_{(\bar{u},\bar{\bs v})\in \bar{\mathsf B}_Q} ( \gamma_1\bar u +    
   \bs q\cdot  \bar{\bs v} )+ (1-\lambda)\max_{(\bar{u},\bar{\bs v})\in\bar{\mathsf B}_Q} ( \gamma_2\bar u +    
   \bs q\cdot  \bar{\bs v} ). 
\end{equation*}
Let $\gamma_*=\lambda\gamma_1+(1-\lambda)\gamma_2$ and $(\bar u_*,\bar{\bs v}_*)=(\log u_{\gamma_*},\log\bs v_{\gamma_*})$, a maximizer of the expression on the left hand side. Using the inequalities
\begin{align}
     \lambda\gamma_1\bar u_* +    
    \lambda\bs q\cdot \bar{\bs v}_* &\leq \lambda\max_{(\bar{u},\bar{\bs v})\in \bar{\mathsf B}_Q} ( \gamma_1\bar u +    
   \bs q\cdot  \bar{\bs v} ),\quad\textrm{and}\\  
    (1-\lambda)\gamma_2\bar u_* +   
   (1-\lambda)\bs q\cdot  \bar{\bs v}_* &\leq (1-\lambda)\max_{(\bar{u},\bar{\bs v})\in\bar{\mathsf B}_Q} ( \gamma_2\bar u +    
   \bs q\cdot  \bar{\bs v} ),
\end{align}
and the previous equality, we obtain 
\begin{align}
    \gamma_1\bar u_* + \bs q\cdot \bar{\bs v}_* &= \max_{(\bar{u},\bar{\bs v})\in \bar{\mathsf B}_Q} ( \gamma_1\bar u +    
   \bs q\cdot  \bar{\bs v} ) = -g(\gamma_1,\bs q)\\
   \gamma_2\bar u_* + \bs q\cdot \bar{\bs v}_* &= \max_{(\bar{u},\bar{\bs v})\in \bar{\mathsf B}_Q} ( \gamma_2\bar u +    
   \bs q\cdot  \bar{\bs v} ) = -g(\gamma_2,\bs q)
\end{align}
Note that both 
\begin{align*}
    \{ (\bar{u},\bar{\bs v})\in\mathbb{R}^{d+1} : \gamma_1\bar u +    
   \bs q\cdot  \bar{\bs v} &= -g(\gamma_1,\bs q) \} \\
   \{ (\bar{u},\bar{\bs v})\in\mathbb{R}^{d+1} : \gamma_2\bar u +    
   \bs q\cdot  \bar{\bs v} &= -g(\gamma_2,\bs q) \}
\end{align*}
define different supporting hyperplanes to $\bar{\mathsf B}_Q$ that go through $(\bar u_*, \bar{\bs v}_*)$. This contradicts the smoothness of $\partial\mathsf B_Q$, i.e., Lemma~\ref{Bsmooth}.

Finally, note that due to the nestedness of $\mfC$, by definition, $g(\gamma,\bs q)$ is non-decreasing in $\gamma$. Strict concavity gives that it is strictly increasing in $\gamma$.

\end{proof}

\begin{lemma}\label{hcequivlem}
Let $\mf C$ be a nice nested feasible filtration and $(\gamma,\bs q)\in\mathbb{R}_{>0}\times \mathbb{Q}_{>0}^{d}$, $|\bs q|=1$, $\gamma\in\Gamma_{\bs q}$, as in Definition~\ref{defsmoothpathclass}. Let $\mf{H}$ be the entropy-parameterization of $\mf C$, Definition ~\ref{entropdef}. The function $g$, defined in Lemma~\ref{limlem}, gives the parameter exchange between the time constants of $\mf H$ and $\mf C$, i.e., if $\zeta=g(\gamma,\bs q)$ then 
\begin{align}\label{exchangeeq}
 \lim_{n\to\infty} \frac{T(\mc{H}^\zeta_{n\bs q})}{n}\eqqcolon\mu_{{\mf H} }(\zeta,\bs q)=\mu_{{\mf C} }(\gamma,\bs q).
\end{align}
\end{lemma}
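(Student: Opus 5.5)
We prove \eqref{exchangeeq} by sandwiching: for $n$ large, the entropy-parameterized family $\mc H^\zeta_{\bs r_n}$, with $\bs r_n=\lfloor n\bs q\rceil$, lies between two families of the original filtration. Fix $\varepsilon>0$ small enough that $\gamma\pm\varepsilon\in\Gamma_{\bs q}$. By Lemma~\ref{strictentropy}, $g(\cdot,\bs q)$ is strictly increasing on $\Gamma_{\bs q}$, so
\[
g(\gamma-\varepsilon,\bs q)<\zeta=g(\gamma,\bs q)<g(\gamma+\varepsilon,\bs q).
\]
Let $\delta>0$ be smaller than both gaps.

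\textbf{Step 1: squeeze $K_{\bs r_n}(\zeta)$.} By \eqref{intrinsicentropy} of Lemma~\ref{limlem}, for all large $n$,
\[
\card(\mc C^{\gamma+\varepsilon}_{\bs r_n})>e^{n(\zeta+\delta)}\ge e^{\zeta|\bs r_n|}
\quad\text{and}\quad
\card(\mc C^{\gamma-\varepsilon}_{\bs r_n})<e^{n(\zeta-\delta)}\le e^{\zeta|\bs r_n|}.
\]
Here we use $|\bs r_n|=n+O(1)$, since $|\bs q|=1$ and rounding changes the $\ell_1$ norm by a bounded amount. Nestedness makes $\gamma'\mapsto\card(\mc C^{\gamma'}_{\bs r_n})$ nondecreasing. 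Hence the set in Definition~\ref{entropdef} contains $\gamma-\varepsilon$, and it contains no $\gamma'\ge\gamma+\varepsilon$. Therefore
\[
\gamma-\varepsilon\le K_{\bs r_n}(\zeta)\le\gamma+\varepsilon,
\]
and, again by nestedness,
\[
\mc C^{\gamma-\varepsilon}_{\bs r_n}\subseteq\mc H^\zeta_{\bs r_n}\subseteq\mc C^{\gamma+\varepsilon}_{\bs r_n}.
\]
The real-endpoint conventions agree: $T(\mc H^\zeta_{n\bs q})$ uses the same rounded point $\bs r_n$. This gives, for all large $n$ almost surely (the inclusions are deterministic),
\[
T(\mc C^{\gamma+\varepsilon}_{n\bs q})\le T(\mc H^\zeta_{n\bs q})\le T(\mc C^{\gamma-\varepsilon}_{n\bs q}).
\]

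\textbf{Step 2: take limits.} Each family $\mfC^{\gamma\pm\varepsilon}$ is feasible, so Lemma~\ref{subadd} gives almost sure convergence of the outer terms divided by $n$ to $\mu_{\mfC}(\gamma\pm\varepsilon,\bs q)$. Intersecting over a countable sequence $\varepsilon_j\downarrow0$, we get almost surely
\[
\mu_{\mfC}(\gamma+\varepsilon_j,\bs q)\le\liminf_n\frac{T(\mc H^\zeta_{n\bs q})}{n}\le\limsup_n\frac{T(\mc H^\zeta_{n\bs q})}{n}\le\mu_{\mfC}(\gamma-\varepsilon_j,\bs q).
\]

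\textbf{Step 3: continuity in $\gamma$.} The remaining and main point is that $\mu_{\mfC}(\cdot,\bs q)$ is continuous at $\gamma$. Since $\gamma$ is an interior point of $\Gamma_{\bs q}\subseteq\mbb R_{>0}$, this follows from convexity in $\gamma$ (Lemma~\ref{convexitylem}, via \eqref{convmu}) together with finiteness, $0\le\mu_{\mfC}\le\ex\tau$. A finite convex function on an open interval is continuous. Letting $j\to\infty$ then yields the existence of the limit and its equality with $\mu_{\mfC}(\gamma,\bs q)$.

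The delicate part of the argument is Step 1. Strict monotonicity of $g$ is needed to separate the entropy levels at $\gamma\pm\varepsilon$ from $\zeta$; without it, $K_{\bs r_n}(\zeta)$ could drift anywhere within a flat stretch of $g$. The fact that \eqref{intrinsicentropy} is taken along $\bs r_n=\lfloor n\bs q\rceil$, rather than along exact multiples of $\bs q$, is what makes the comparison valid for all $n$ and not just along a subsequence.
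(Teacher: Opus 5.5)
Your proof is correct and follows the paper's argument: a deterministic sandwich of $\mc H^\zeta_{\lfloor n\bs q\rceil}$ between two $\mfC$-families, closed by Lemma~\ref{subadd} and the continuity of $\mu_{\mfC}(\cdot,\bs q)$ at $\gamma$. The only difference is that you bracket at $\gamma\pm\varepsilon$, which needs only strict monotonicity of $g(\cdot,\bs q)$, whereas the paper brackets at $g^{-1}(\zeta\pm\epsilon,\bs q)$ and invokes continuity of $g^{-1}$; you also spell out the inclusion $\mc C^{\gamma-\varepsilon}_{\bs r_n}\subseteq\mc H^\zeta_{\bs r_n}\subseteq\mc C^{\gamma+\varepsilon}_{\bs r_n}$ and the countable intersection behind the almost-sure statement, both of which the paper leaves implicit.
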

\begin{proof}
Note that since $g(\cdot,\bs q)$ is a strictly increasing continuous function, it has a continuous inverse $g^{-1}(\cdot,\bs q)$.
 Existence of the limit $\mu_{{\mf H} }(\zeta,\bs q)$ and the identity \eqref{exchangeeq} follows by Lemma~\ref{limlem}, nestedness, and $|\lfloor n\bs q\rceil|/n\to1$: for sufficiently small $\epsilon>0$, and sufficiently large $n$,
\begin{align}
    \frac{T(\mc C^{g^{-1}(\zeta + \epsilon,\bs q)}_{n\bs q})}{n}  \leq \frac{T(\mc H^\zeta_{n\bs q})}{n} \leq \frac{T(\mc C^{g^{-1}(\zeta - \epsilon,\bs q)}_{n\bs q})}{n},
\end{align}
and letting $\epsilon\downarrow0$ while invoking Lemma~\ref{subadd}, the continuity of $\mu_{{\mf C}}(\gamma,\bs q)$ in $\gamma$, and the continuity of $g^{-1}(\zeta,\bs q)$ in $\zeta$. 
\end{proof}

\begin{proof}[Proof of Theorem~\ref{reparamconv}]
We first verify continuity of $g^{-1}$. Let $(\zeta_j,\bs q_j)\to(\zeta,\bs q)$ in the stated domain and put $\gamma_j=g^{-1}(\zeta_j,\bs q_j)$, $\gamma=g^{-1}(\zeta,\bs q)$. For $a<\gamma<b$ in $\Gamma_{\bs q}$, Lemma~\ref{limlem} and strict increase give $g(a,\bs q_j)<\zeta_j<g(b,\bs q_j)$ for large $j$. Nestedness implies $a<\gamma_j<b$; letting $a,b\to\gamma$ proves $\gamma_j\to\gamma$.

The conclusion follows by the identity $\mu_{{\mf H} }(\zeta,\bs q)=\mu_{{\mf C}}(g^{-1}(\zeta,\bs q),\bs q)$ due to Lemma~\ref{hcequivlem}, and the joint continuity of $\mu_{{\mf C}}$ due to Lemma~\ref{convexitylem}.
\end{proof}

\section{Examples}\seceq
\subsection{Constrained by Number of Turns}\label{turnssmooth}
Consider the situation of Example~\ref{numoftruns exa}.
For simplicity, assume for this example that $d=2$. 

To find an algebraic expression for $F_{\mf C}$ for this class, we will first consider the generating function
\begin{align*}
    G(x,y,z)    
    \coloneqq \hspace{-1em}\sum_{k,\ell,m\in \mbb Z_{\geq 0}} \{ \text{\# of paths $\mb A$ from $\bs o$ to $(\ell,m)$,  $f(\mb A)=k$}\}  x^k y^\ell z^m
\end{align*}
of the \emph{level sets}, where $f$ counts the number of turns, and note that 
\begin{align*}
    F_{\mf C}(x,y,z) = \hspace{-1em}\sum_{k,\ell,m\in \mbb Z_{\geq 0}} \{ \text{\# paths $\mb A$ from $\bs o$ to $(\ell,m)$,  $f(\mb A)\leq k-1$}\}  x^k y^\ell z^m,
\end{align*}
and therefore the relationship
\begin{align*}
    F_{\mf C}(x,y,z) = \frac{xG(x,y,z)}{1-x}. 
\end{align*}

\begin{theorem}
The generating function is equal to
\begin{align}
    F_{\mf C} = \frac{x(-2xyz+2yz-y-z)}{(x^2yz - yz + y+z -1)(1-x)}\eqqcolon \frac{P(x,y,z)}{Q(x,y,z)}.
\end{align}
Moreover, $\mf C$ is a nice path class, with
\[
    \Gamma_{\bs q}=(0,2q_1q_2)
\]
for every $\bs q=(q_1,q_2)\in\mbb Q_{>0}^{2}$ satisfying $q_1+q_2=1$.
\end{theorem}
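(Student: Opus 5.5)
The plan has two parts: first derive the rational form of $F_{\mf C}$ by a run decomposition, then check the conditions of Definition~\ref{defsmoothpathclass} by solving the critical point equations explicitly on the nontrivial factor of $Q$.

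\textbf{Computing $F_{\mf C}$.} Every nonempty planar directed path splits uniquely into maximal runs that alternate between East and North. A path with $j$ runs has exactly $j-1$ turns. An East-run contributes $a\coloneqq y/(1-y)$ and a North-run contributes $b\coloneqq z/(1-z)$, and each turn between runs carries a factor $x$. Summing the alternating words that start with an East-run gives $a(1+xb)/(1-x^2ab)$, and symmetrically for a North start. Hence
\[
G=\frac{a+b+2xab}{1-x^2ab}.
\]
Multiplying numerator and denominator by $(1-y)(1-z)$ gives
\[
G=\frac{y+z-2yz+2xyz}{1-y-z+yz-x^2yz}.
\]
Combining this with $F_{\mf C}=xG/(1-x)$ and adjusting signs yields the stated $P/Q$. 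Write $H\coloneqq x^2yz-yz+y+z-1$. The irreducibility of $H$ should be a short check: $H$ is linear in $y$, and its coefficients $x^2z-z+1$ and $z-1$ have no common factor. Since $H\ne\pm(1-x)$, the polynomial $Q=H(1-x)$ is square-free. Coprimality of $P$ and $Q$ is checked by evaluating $P$ on $x=1$, where it is nonzero, and on $H=0$.

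\textbf{Critical points.} Since $\partial_x Q=\partial_x H\cdot(1-x)-H$, at points of $\{H=0,\,x\ne1\}$ the smooth critical equations \eqref{smooth cond2 eq} reduce to those of $H$. The three conditions are
\[
\frac{2u^2vw}{\gamma}=\frac{v(u^2w-w+1)}{q_1}=\frac{w(u^2v-v+1)}{q_2},\qquad H(u,v,w)=0.
\]
I would solve this system explicitly by eliminating $v$ and $w$, which gives $u$ as a function of $(\gamma,\bs q)$. The goal is to show there is a solution in $\mbb R^3_{>0}$ with $0<u<1$ and $0<v,w<1$ exactly when $0<\gamma<2q_1q_2$. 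As a consistency check, $\gamma=2q_1q_2$ is the typical turn density of an unconstrained path in direction $\bs q$, and it should correspond to $u\to1$. Nonvanishing of $\partial_x Q=2uvw(1-u)$ at such a point is then immediate.

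\textbf{Minimality (main obstacle).} By Proposition~\ref{minimaleqvconds}(3), it suffices to show $Q(tu,tv,tw)\ne0$ for $0<t<1$. The factor $1-tu$ is nonzero because $u<1$. For the factor $H$, I would use the product form
\[
\frac{1}{-H}=\frac{1}{(1-y)(1-z)}\cdot\frac{1}{1-x^2ab}.
\]
At a positive point this converges, with all coefficients nonnegative, whenever $y<1$, $z<1$ and $x^2ab<1$. The quantity $x^2ab$ is strictly increasing along the ray $t\mapsto(tu,tv,tw)$. It equals $1$ at $t=1$, since $H=0$ there together with $v,w<1$. Hence $H(tu,tv,tw)\ne0$ for all $t<1$.

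The delicate point is verifying $v,w<1$ throughout the range $0<\gamma<2q_1q_2$, which must come out of the explicit parametrization. I expect that algebra, showing that the positive root lies in the unit cube exactly on $\Gamma_{\bs q}$, to be the hardest step. The endpoint $\gamma\to 2q_1q_2$, where $u\to1$ and the two factors of $Q$ meet, also needs separate care.
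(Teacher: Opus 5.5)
Your outline is sound. Its architecture matches the paper's: compute $G$, set $F_{\mf C}=xG/(1-x)$, solve the smooth critical equations on $\{H=0,\ x\neq 1\}$, and check minimality via Proposition~\ref{minimaleqvconds}(3). Two steps, however, are done differently.

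\emph{Computing $G$.} The paper sets up a four-function system $G^{\EE},G^{\NE},G^{\EN},G^{\NN}$, indexed by first and last step and with boundary terms for short paths, and solves it. Your decomposition into alternating maximal runs gives $G=(a+b+2xab)/(1-x^2ab)$ in one line and is less error-prone.

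\emph{Minimality.} The paper substitutes its explicit critical point and factors $Q(tu,tv_1,tv_2)$ as $(1-tu)(t-1)/(4q_1q_2)$ times a bracket of positive terms. You instead write $-H=(1-y)(1-z)(1-x^2ab)$ and use that $t\mapsto t^2u^2\,\frac{tv}{1-tv}\,\frac{tw}{1-tw}$ is strictly increasing and equals $1$ at $t=1$. Your argument is more structural: it shows that \emph{every} positive zero of $H$ with $u,v,w<1$ is minimal, without the formula for the point. The paper's identity is a verification tied to the explicit parametrization.

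You also justify irreducibility of $H$ (primitive and linear in $y$) and coprimality of $P,Q$, which the paper only asserts.

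The step you leave as a promissory note, the explicit critical point, is much easier than you expect. On $H=0$ one has $u^2vw=(1-v)(1-w)$, hence $v(u^2w-w+1)=1-w$ and $w(u^2v-v+1)=1-v$. Call the common value of the three ratios $\lambda$. The system becomes $1-w=q_1\lambda$, $1-v=q_2\lambda$, and $2q_1q_2\lambda^2/\gamma=\lambda$. So $\lambda=\gamma/(2q_1q_2)$ and
\[
v=1-\frac{\gamma}{2q_1},\qquad w=1-\frac{\gamma}{2q_2},\qquad u=\frac{\gamma}{\sqrt{(2q_1-\gamma)(2q_2-\gamma)}},
\]
exactly as in the paper. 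From this:
\begin{itemize}
\item $v,w<1$ is automatic from $\gamma>0$;
\item $v,w>0$ follows from $\gamma<2q_1q_2<2\min(q_1,q_2)$;
\item $u<1$ is equivalent to $\gamma<2q_1q_2$, because $(2q_1-\gamma)(2q_2-\gamma)-\gamma^2=4q_1q_2-2\gamma$.
\end{itemize}
So what you flagged as the hardest step is a two-line computation. Your worry about the endpoint $\gamma=2q_1q_2$ is moot, since $\Gamma_{\bs q}$ is open and niceness only asks for existence on it. The ``exactly when'' direction is likewise not needed.
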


To prove it, we define the generating function  
\begin{align*}
\begin{alignedat}{2}
    &G^\EE(x,y,z)    
    \coloneqq \sum_{k,\ell,m\in \mbb Z_{\geq 0}} \big\{ \text{\# of paths} \\&\text{$\mb A=\big(\bs o, (1,0), \bs a_2, \ldots, \bs a_{|\mb A|-2},(\ell-1,m), (\ell,m)\big) $ ,  $f(\mb A)=k$}\big\}  x^k y^\ell z^m,
\end{alignedat}
\end{align*}
which counts the paths of length at least two with fixed first and last steps (both in the East direction, i.e. (1,0)). Generating functions $G^\NE$, $G^\EN$, $G^\NN$ are similarly defined. For instance, $G^\NE$ counts the paths in the form $\mb A = \big(\bs o, \bs a_1,  \ldots, \bs a_{|\mb A|}\big)$ with $\bs a_1 = (0,1)$, $\bs a_{|\mb A|-1} = (\ell-1,m)$, and $\bs a_{|\mb A|}=(\ell,m)$. Note  that the following two identities hold.
\begin{align}
G = G^\EE+ G^\NE + G^\EN + G^\NN + y+ z,
\end{align}
and 
\begin{align}
G^\EE = x^2y^2G^\NN + xy^2G^\EN + y^2G^\EE + xy^2G^\NE + y^2 +y^3 +x^2y^2z 
\end{align}
 The first is due to that $G$ counts the paths with all possible start and end steps, and in addition it includes the paths of size one: $(\bs o, (0,1))$ and $(\bs o, (1,0))$. The second identity follows the bijection between the set of paths with first and last steps towards East and the set of all paths with East step concatenated in the beginning and at the end. Additional $x$ terms in front of the $G^{\mathrm{XX}}$, $\mathrm{X}\in \{\mathrm{E},\mathrm{N}\}$, represent the additional turns at the concatenations. Since $G^{\mathrm{XX}}$ counts only the paths of length at least two, the terms $y^2$ and $y^3$ are added to account for the paths of length two and three with all East steps, while $x^2y^2z$ is for the path East-North-East.  
 Similarly, we have,
\begin{align*}
   G^\NN &= z^2 G^\NN + xz^2G^\EN + x^2z^2G^\EE + xz^2G^\NE + z^2 +z^3 +x^2yz^2  \\
   G^\EN &= xyz G^\NN + yzG^\EN + xyz G^\EE + x^2yz G^\NE + xyz + xy^2z +xyz^2\\
   G^\NE &= xyz G^\NN + x^2yzG^\EN + xyz G^\EE + yz G^\NE + xyz + xy^2z +xyz^2
\end{align*}
 
 Solving these identities for $G$ we obtain
\begin{align}\label{eq:G}
    G = \frac{-2xyz+2yz-y-z}{x^2yz - yz + y+z -1}.
\end{align}
Together with $F_{\mf C}=xG/(1-x)$, this proves the formula and supplies the rational representation in Condition~(1) of Definition~\ref{defsmoothpathclass}.

Figure~\ref{fig:example-convergence}(a) shows the convergence region $\mathsf B_Q$.

 Next we verify that $F_{\mf C}$ satisfies the Condition~(2) of Definition~\ref{defsmoothpathclass}. Note first that $Q$ is square-free in this example.
 For some $\gamma\in\mbb R_{>0}$, $\bs q= (q_1,q_2)\in \mbb Q_{>0}^2$, $q_1+q_2=1$, and a candidate minimal point $(u,v_1,v_2)$, Equation~\refeq{smooth cond2 eq} reads 
\begin{align}
    \frac{u}{\gamma}\frac{\partial Q}{\partial x}(u,v_1,v_2) = \frac{v_1}{q_1} \frac{\partial Q}{\partial y}(u,v_1,v_2) = \frac{v_2}{q_2} \frac{\partial Q}{\partial z} (u,v_1,v_2)\neq 0,
\end{align}
 which is equivalent to
\begin{align}
    \frac{u}{\gamma}&[2uv_1v_2(1-u) - (u^2v_1v_2-v_1v_2+v_1+v_2-1) ] \\&= \frac{v_1}{q_1} (1-u) (u^2v_2-v_2+1)
    = \frac{v_2}{q_2} (1-u) (u^2v_1-v_1+1),
\end{align}
$u\neq 1$. In addition, the singularity requirement of Condition~2-(b) gives 
\begin{align}
    Q(u,v_1,v_2) = (1-u)(u^2v_1v_2-v_1v_2+v_1+v_2-1) =  0.
\end{align}
It is straightforward to verify that these conditions imply
\begin{align}
    u &= \frac{\gamma}{\sqrt{(2q_1-\gamma)(2q_2-\gamma)}},\\
    v_1 &= 1-\frac{\gamma}{2q_1}, \quad
    v_2 = 1-\frac{\gamma}{2q_2}.
\end{align}
 Choosing $\gamma\in (0,2q_1q_2)$ ensures that $0<u<1$ and $v_1,v_2>0$. 
Next, we check the minimality of $(u,v_1,v_2)$. Through some algebra, we find that  
\begin{align*}
\begin{alignedat}{2}
    Q(tu,tv_1,tv_2) = \frac{(1-tu)(t-1)}{4q_1q_2} \big[ &\gamma^2(t+1)(t^2+1) + 2\gamma(1-\gamma)\\& + (1-t)(2q_1-\gamma)(2q_2-\gamma)\big]. 
\end{alignedat}
\end{align*} 
For $0<t<1$, the factors $1-tu$, $\gamma^2(t+1)(t^2+1)$, $2\gamma(1-\gamma)$, and $(1-t)(2q_1-\gamma)(2q_2-\gamma)$ are all strictly positive, while $t-1\neq0$.  Hence $Q(tu,tv_1,tv_2)\neq0$ for $0<t<1$, which verifies the minimality of $(u,v_1,v_2)$ by Proposition~\ref{minimaleqvconds}.
 
Consequently, we have proved that $\mf C$ associated with the path function that counts the number of turns is indeed a nice path class with the choice
\[
    \Gamma_{\bs q}\coloneqq (0,2q_1q_2).
\]

\subsection{Consecutive Runs}\label{2Es}
    For this example, let $f$ count the number of sequences of consecutive East steps of length at least two, that is, for a path $\mb A = \big(\bs a_0,  \ldots, \bs a_{|\mb A|}\big)$ with $|\mb A|\geq2$,
    \begin{align*}
    \begin{alignedat}{2}
        &f(\mb A)\coloneqq 
        \ind\{\bs a_{2}-\bs a_{1} = \bs a_{1}-\bs a_{0} = (1,0)\}+\\
        &\sum_{i=3}^{|\mb A|}\ind\{\bs a_{i}-\bs a_{i-1} = \bs a_{i-1}-\bs a_{i-2} = (1,0)\}
        \ind\{\bs a_{i-2}-\bs a_{i-3} = (0,1)\}.
    \end{alignedat}
    \end{align*}
For $|\mb A|=1$, set $f(\mb A)=0$.
Define the path class $\mf C$ accordingly, as in \eqref{Czgammadefeqn}.
\begin{theorem}
The generating function is equal to
\begin{align}
 F_{\mf C}(x,y,z) = \frac{x(-xy^2z - xy^2 + y^2z + y^2 - y - z)}{(xy^2z - y^2z + y + z - 1)(1-x)}\eqqcolon\frac{P(x,y,z)}{Q(x,y,z)}.
\end{align}
Moreover, $\mf C$ is a nice path class. For every $q\in\mbb Q\cap(0,1)$, there exists $\iota(q)>0$ such that, with $\bs q=(q,1-q)$, one may take
\[
\Gamma_{\bs q}=(0,\iota(q)).
\]
\end{theorem}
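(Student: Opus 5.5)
We follow the template of Section~\ref{turnssmooth}. First we compute the level-set generating function
\[
G(x,y,z)=\sum_{k,\ell,m}\#\{\mb A:\bs o\to(\ell,m),\ f(\mb A)=k\}\,x^ky^\ell z^m ,
\]
and then use $F_{\mf C}=xG/(1-x)$, which holds exactly as before since $\mc C^{k/|\bs r|}_{\bs r}$ counts paths with $f\leq k-1$.

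To get $G$, the cleanest route is a transfer-matrix (regular language) decomposition. Every directed path in the plane is a word in $E,N$. It splits uniquely into an optional initial block of East steps, followed by blocks of the form $N E^{j}$ with $j\geq 0$. An East run of length $j$ contributes $y^j$, and contributes a factor $x$ exactly when $j\geq 2$. The initial run (no preceding $N$) also gets $x$ when $j\geq2$; this agrees with the first indicator in the definition of $f$.

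Hence, with the one-run weight
\[
R(x,y)=1+y+\frac{xy^2}{1-y},
\]
we get $G=R\cdot\frac{1}{1-zR}-1$, where the $-1$ removes the empty path. Simplifying this should reproduce
\[
G=\frac{-xy^2z-xy^2+y^2z+y^2-y-z}{xy^2z-y^2z+y+z-1}.
\]
We will check this by clearing denominators. The denominator $(1-y)(1-zR)$ expands to $1-y-z+y^2z-xy^2z$, which matches $Q/(1-x)$ up to sign. We will also check that $Q=(xy^2z-y^2z+y+z-1)(1-x)$ is square-free and coprime to the numerator. This settles Condition~(1) of Definition~\ref{defsmoothpathclass}.

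For Condition~(2), write $Q_1=xy^2z-y^2z+y+z-1$. Since the factor $1-x$ only contributes the hyperplane $u=1$, we look for critical points on $Q_1=0$ with $0<u<1$; there $Q_x=(1-u)\partial_xQ_1$ up to sign. The system \eqref{smooth cond2 eq} together with $Q_1=0$ becomes three equations in $(u,v_1,v_2)$ with parameters $(\gamma,q)$:
\begin{align}
\frac{u}{\gamma}\,uv_1^2v_2=\frac{v_1}{q}\,\partial_yQ_1=\frac{v_2}{1-q}\,\partial_zQ_1 .
\end{align}
Because $Q_1$ is linear in $x$, we can eliminate $u$ using $u v_1^2v_2=v_1^2v_2-v_1-v_2+1$. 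This leaves two equations in $v_1,v_2$.

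We will not aim for closed forms. Instead we argue near $\gamma\downarrow0$. At $\gamma=0$ the natural limit point is the minimal critical point of the factor $1-y-z$ (the $x=0$ slice, i.e.\ paths with no long East runs). The relevant point is $u\to0$, $v_1+v_2-1+\ldots=0$, with direction $\bs q$ fixed. We then use the implicit function theorem in $\gamma$ to produce a smooth branch $(u_\gamma,\bs v_\gamma)\in\mbb R^3_{>0}$ for $\gamma\in(0,\iota(q))$. An alternative, if the degeneration at $u=0$ is awkward, is to parametrize by $u\in(0,u_0)$ instead: solve the two equations for $(v_1,v_2)$ and read off $\gamma$ as a continuous increasing function of $u$ with $\gamma\to0$ as $u\to0$. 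The interval $\Gamma_{\bs q}$ is then its image, and $\iota(q)$ is its supremum (possibly the value where $v_1$ or $v_2$ hits $0$, or where $\partial_xQ$ vanishes). Positivity of $\partial_xQ=(1-u)v_1^2v_2\neq0$ is automatic for positive points with $u<1$.

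The main obstacle is minimality, i.e.\ Condition~2(b). Via Proposition~\ref{minimaleqvconds}, we must show $Q(tu,tv_1,tv_2)\neq0$ for $0<t<1$. The factor $1-tu$ is positive, so it suffices to study
\[
\phi(t)=Q_1(tu,tv_1,tv_2)=t^4uv_1^2v_2-t^3v_1^2v_2+t(v_1+v_2)-1 .
\]
We have $\phi(0)=-1$ and $\phi(1)=0$. The plan is to show that $\phi$ has no root in $(0,1)$. First try: since $Q_1=0$ gives $(1-u)v_1^2v_2=v_1+v_2-1$, we can write
\[
\phi(t)=-(1-t)\bigl[1-t(v_1+v_2)\ldots\bigr],
\]
and factor out $(t-1)$ explicitly. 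The remaining cubic is $c(t)=1+t+t^2-\ldots$ (with coefficients in $u,v_1,v_2$). We then show its coefficients have a fixed sign when $v_1+v_2>1$ and $u<1$. If the sign pattern fails for part of the range, we shrink $\iota(q)$ so that, by continuity from $\gamma\to0$, the cubic stays bounded away from zero on $[0,1]$. This is where the existence of some $\iota(q)>0$, rather than an explicit value, gets used.
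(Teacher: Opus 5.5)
\emph{Overall.} Your generating-function step is correct, and it is a bit slicker than the paper's three-state system. With $R=1+y+xy^2/(1-y)$, the formula $G=R/(1-zR)-1$ simplifies exactly to the stated quotient, and your square-free and coprimality checks are fine. (Your displayed critical system has a stray factor $u$; the left side should be $uv_1^2v_2/\gamma$.) There is, however, a genuine gap in Condition~(2), in both the existence and the minimality steps, and it sits in the regime $q\ge 1/3$.

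\emph{Existence.} The $\gamma\downarrow0$ limit is not governed by $1-y-z$, which is the $x=1$ slice. The $x=0$ slice is $Q_1(0,y,z)=-(1-y)\bigl(1-z(1+y)\bigr)$. After clearing denominators the system is
$(1-q)uv_1^2v_2=\gamma(1-v_1)$, $q(1-v_1)=(1-q)(2-v_1-2v_2)$, $Q_1=0$.
At $\gamma=0$ it forces $u=0$. Its positive solutions are then $(0,1,\tfrac12)$ and, for $q<1/2$, $(0,\tfrac{q}{1-2q},\tfrac{1-2q}{1-q})$; the latter has $v_1\ge1$ once $q\ge1/3$. So for $q\ge1/3$ the branch must start at $(0,1,\tfrac12)$, the node where the two components of the $x=0$ slice cross. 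There $\partial_yQ_1=\partial_zQ_1=0$, so the gradients of $Q_1$ and of the first equation are both multiples of $(1,0,0)$. The Jacobian in $(u,v_1,v_2)$ is therefore singular, and the implicit function theorem in $\gamma$ fails. Your fallback parametrization by $u$ fails for the same reason: the $Q_1$-row of the Jacobian in $(v_1,v_2)$ vanishes. What works is parametrizing by $v_1$. Then $v_2$ is affine in $v_1$, and $Q_1=0$ gives $\gamma=(1-q)\bigl(1-v_2(1+v_1)\bigr)$, i.e.\ the quadratic $(1-2q)v_1^2-(1-q)v_1+q-2\gamma=0$. Finally $u=\gamma(1-v_1)/\bigl((1-q)v_1^2v_2\bigr)$. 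This is the paper's explicit solution, and it also yields $0<v_1<1$.

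\emph{Minimality.} Using $Q_1(u,v_1,v_2)=0$,
\[
\phi(t)=(t-1)c(t),\qquad c(t)=1-(v_1+v_2-1)(t+t^2)+uv_1^2v_2\,t^3 ,
\]
with $v_1+v_2-1=(1-u)v_1^2v_2>0$. The coefficient signs are thus $+,-,-,+$ at every relevant point, so a fixed-sign argument cannot work. Your continuity fallback does work for $q<1/3$. For $q\ge1/3$, however, the limit point $(0,1,\tfrac12)$ gives $\phi_0(t)=-\tfrac12(t-1)^2(t+2)$. Hence $c_0(1)=0$, and there is no uniform bound away from zero.

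The missing input is $v_1<1$. With it, your own decomposition finishes immediately: $Q_1=-(1-y)\bigl(1-zR(x,y)\bigr)$, and $t\mapsto tv_2R(tu,tv_1)$ is strictly increasing with value $1$ at $t=1$. This is the paper's argument. Alternatively, in your cubic, $c(t)\ge 3-2(v_1+v_2)=(1-v_1)/(1-q)>0$ on $[0,1]$, by the linear relation between $v_1$ and $v_2$.
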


To prove it, we first write the relations
\begin{align}
    G^\E(x,y,z) &= y + yG^\N(x,y,z) \\
    G^\EE(x,y,z) &= xyG^\E(x,y,z) + yG^\EE(x,y,z) \\
    G^\N(x,y,z) &=  z + zG^\E(x,y,z) + zG^\EE(x,y,z) + zG^\N(x,y,z) 
\end{align}
where $G^\E$, $G^\EE$, and $G^\N$ are the generating functions of paths whose final East-run has length one, whose final East-run has length at least two, and which end with a North step, respectively. Here $x$ marks the number of East-runs of length at least two, and $y,z$ mark East and North steps.
Note that 
\[F_{\mf C} =  \frac{x(G^\E+G^\EE+G^\N)}{1-x}.
\] 
Solving these equations and substituting in this identity gives the stated formula.

Figure~\ref{fig:example-convergence}(b) shows the convergence region $\mathsf B_Q$.

\begin{figure}[tbp]
    \centering
    \includegraphics[width=\textwidth]{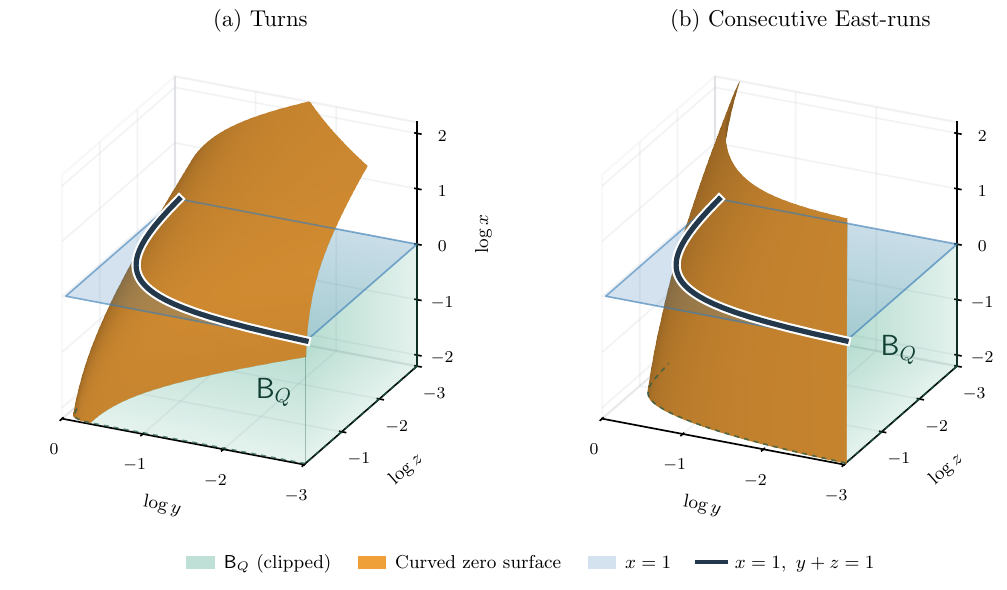}
    \caption{Convergence regions for turns (left) and consecutive East-runs (right), on identical logarithmic scales with $0<y,z<1$. Green shows the portion of the open region $\mathsf B_Q$ within the plotting window. The positive-real zero surfaces of $Q$ are $x=1$ (blue) and, respectively, $x^2yz=(1-y)(1-z)$ and $xy^2z=1-y-z+y^2z$ (orange); the dark curve is their common intersection $x=1$, $y+z=1$. For every fixed $0<y,z<1$, sufficiently small $x>0$ gives convergence on the left, whereas on the right this holds exactly when $z(1+y)<1$. This difference reflects the paths with zero counted events: zero turns permits only straight paths, while zero counted East-runs permits every path with no consecutive East steps.}
    \label{fig:example-convergence}
\end{figure}
 Again, $Q$ is square-free in this example. For a direction $(\gamma, q_1,q_2)$ and a candidate minimal point $(u,v_1,v_2)$, smooth critical point condition \eqref{smooth cond2 eq} reads as
\begin{align}\label{criteenneq}
\begin{alignedat}{1}
    &\frac{u}{\gamma}[(1-u)v_1^2v_2  - (uv_1^2v_2 - v_1^2v_2  + v_1 + v_2 -1) ] \\&= \frac{v_1}{q_1} (1-u)(2uv_1v_2 - 2v_1v_2  +1)= \frac{v_2}{q_2} (1-u) (uv_1^2 - v_1^2 + 1 ),
    \end{alignedat}
\end{align}
with $u\neq 1$. Along with the pole condition
\begin{align}\label{poleeq}
    uv_1^2v_2 - v_1^2v_2  + v_1 + v_2 -1 = 0,
\end{align}
\eqref{criteenneq} reads as
\begin{align}\label{criteeneqqsimp}
\begin{alignedat}{1}
    &\frac{uv_1^2v_2}{\gamma}= \frac{1-v_1}{1-q} = \frac{2-v_1-2v_2}{q}  ,
    \end{alignedat}
\end{align}
where we assumed that $\bs q = (q,1-q)$. Solving $v_2$ in terms of $v_1$ using the second equation in \eqref{criteeneqqsimp}, we obtain
\begin{equation}\label{v2inv1ueq}
    v_2 = \frac{2-3q+(2q-1)v_1}{2(1-q)}.
\end{equation}
Combining \eqref{poleeq} and the first equation of \eqref{criteeneqqsimp} in order to cancel out the higher order terms, we get
\begin{equation}
    \frac{\gamma (u-1)(1-v_1) + u(1-q)(v_1+v_2-1) }{u(1-q)} = 0.
\end{equation}
 Plugging \eqref{v2inv1ueq} in the equation above implies
\begin{equation}
    \gamma uv_1 -\gamma u -\gamma v_1 +\gamma +\frac{qu}{2} - \frac{uv_1}{2} =0,
\end{equation}
for $u\neq 0$, $q\neq 1$, which lets us write $u$ in terms of $v_1$ as
\begin{equation}\label{u_in_v1_eqn}
    u = \frac{2\gamma (v_1-1)}{2\gamma (v_1-1) + q -v_1}.
\end{equation}
Plugging the expressions for $v_2$ and $u$ in terms of $v_1$, the first equation of \eqref{criteeneqqsimp} takes the form
\begin{equation}
    \frac{\left(v_{1} - 1\right)^{2} \left(- 2 \gamma - 2 q v_{1}^{2} + q v_{1} + q + v_{1}^{2} - v_{1}\right)}{\left(q - 1\right) \left(2 \gamma v_{1} - 2 \gamma + q - v_{1}\right)} = 0.
\end{equation}
As $v_1=1$ would make $u=0$, and we are interested in only the positive solutions $(u,v_1,v_2)$, the equation above implies
\begin{equation}\label{v1eqnquad}
    (1-2q)v_1^2 - (1-q)v_1 + q-2\gamma = 0,
\end{equation}
which is a quadratic polynomial in $v_1$ if $q\neq 1/2$, in which case, it has the solutions
\begin{equation}\label{v1sol_eqn}
    v_1 = \frac{(1-q)\pm\sqrt{(3q-1)^2 - 8(2q-1)\gamma}}{2(1-2q)}. 
\end{equation}
We denote the solution with a positive sign in front of the square root in the expression above with $v_1^+$, and $v_1^-$ with the negative sign. Also, given \eqref{u_in_v1_eqn} and \eqref{v2inv1ueq}, let 
\begin{align}\label{uv2soleqn}
\begin{alignedat}{2}
    u^+ &\coloneqq \frac{2\gamma (v_1^+-1)}{2\gamma (v_1^+-1) + q -v_1^+}, \quad && u^- \coloneqq \frac{2\gamma (v_1^- -1)}{2\gamma (v_1^- -1) + q -v_1^-}, \\
    v_2^+ &\coloneqq \frac{2-3q+(2q-1)v_1^+}{2(1-q)},\quad && v_2^- \coloneqq \frac{2-3q+(2q-1)v_1^-}{2(1-q)}.
    \end{alignedat}
\end{align}

For the solutions to be real we need 
$ 8(2q-1)\gamma \leq (3q-1)^2. 
$
Let us first consider the case $0<q<1/2$, which guarantees that $v_1^+$ and $v_1^-$ are real. 
We find, through a symbolic computation tool, that
\begin{align}\label{u-u+eqn}
\begin{alignedat}{1}
    u^-  u^+ &= \frac{4\gamma^3}{(-2\gamma + q)^2 (\gamma +q-1)} < 0 \\
    u^- - u^+ &= \frac{\gamma (1-q) \sqrt{(3q-1)^2 - 8(2q-1)\gamma}}{ (-2\gamma +q)^2 (1-q-\gamma)} > 0
    \end{alignedat}
\end{align}
for $0<\gamma<q/2<1/2$. Therefore $u^->0$ and $u^+<0$. Furthermore, we find 
\begin{align}
    v_2^- &=\frac{3-5q + \sqrt{(3q-1)^2 - 8(2q-1)\gamma}}{4(1-q)}>0 
\end{align}
for $0<\gamma<q/2<1/2$, and 
\begin{align}
    1-q > \sqrt{(3q-1)^2 - 8(2q-1)\gamma}
\end{align}
when $\gamma<q/2$ giving $v_1^->0$ due to \eqref{v1sol_eqn}. We conclude that $(u^-, v_1^-, v_2^-)$ is the unique smooth critical point with positive real entries, provided $u^-\neq1$.

Let us now consider the $q=1/2$ case, for which, equation \eqref{v1eqnquad} gives $v_1=1-4\gamma$, and \eqref{u_in_v1_eqn} and \eqref{v2inv1ueq} give 
\begin{align}
    u&= \frac{\gamma^2}{(1/4 - \gamma )^2},\quad 
    v_2 = 0.5,
\end{align}
which are positive for $\gamma<1/4$.

For $1/2<q<1$, note that the solution $u^-$ given in \eqref{uv2soleqn} 
is still the unique positive real solution, due to \eqref{u-u+eqn}, to smooth critical pole equations for $u$ provided $\gamma\neq q/2$, $u^-\neq1$, and
\begin{align}
    \gamma<\min\left\{1-q, \frac{(3q-1)^2}{8(2q-1)}\right\}.
\end{align}
Furthermore, $v_1^-$ and $v_2^-$ are positive if 
\begin{align}
      \sqrt{(3q-1)^2 - 8(2q-1)\gamma}> \max\{ 1-q, 5q-3\},
\end{align}
which holds if $\gamma<q/2$ when $1/2<q\leq2/3$ and if $\gamma<(1-q)/2$ when $2/3<q<1$. Thus $(u^-, v_1^-,v_2^-)$ is the unique positive real solution to smooth critical pole conditions when $u^-\neq1$ and $\gamma<\min\left\{\frac{1-q}{2}, \frac{(3q-1)^2}{8(2q-1)}\right\}$. A straightforward calculation reveals that $\frac{1-q}{2}$ is always less than  $\frac{(3q-1)^2}{8(2q-1)}$, therefore 
the condition can be simplified to $\gamma<(1-q)/2$.

To check minimality for all $0<q<1$, denote the positive solution above by $(u,v_1,v_2)$. The relevant root of \eqref{v1eqnquad} lies in $(0,1)$ for $0<\gamma<q/2$, since the polynomial is positive at $0$ and negative at $1$. Moreover, as $\gamma\downarrow0$, the explicit formulas give
\[
    (v_1,v_2)\longrightarrow
    \begin{cases}
        \left(\dfrac{q}{1-2q},\dfrac{1-2q}{1-q}\right),&0<q<1/3,\\[6pt]
        (1,1/2),&1/3\leq q<1.
    \end{cases}
\]
By \eqref{criteeneqqsimp}, $u=\gamma(1-v_1)/[(1-q)v_1^2v_2]\to0$. Thus, for each fixed $q$, we may choose $\iota(q)>0$ so that $0<u,v_1<1$ and $v_2>0$ whenever $0<\gamma<\iota(q)$. The factorization
\[
    Q(x,y,z)=-(1-x)(1-y)\left[1-z\left(1+y+\frac{xy^2}{1-y}\right)\right]
\]
and $Q(u,v_1,v_2)=0$ imply $v_2(1+v_1+uv_1^2/(1-v_1))=1$. Hence, for $0<t<1$,
\[
    tv_2\left(1+tv_1+\frac{t^3uv_1^2}{1-tv_1}\right)
    <v_2\left(1+v_1+\frac{uv_1^2}{1-v_1}\right)=1.
\]
Since $1-tu$ and $1-tv_1$ are positive, $Q(tu,tv_1,tv_2)<0$, proving minimality by Proposition~\ref{minimaleqvconds}. Also $Q_x(u,v_1,v_2)=(1-u)v_1^2v_2>0$. Consequently, the path class is nice for $\Gamma_{\bs q}=(0,\iota(q))$.

\section{Conclusion}
We have introduced a model of the efficiency--complexity tradeoff in which an agent's physical cost is the minimum total edge cost within a constrained family of directed paths, and representational complexity is measured by the growth of that family. The path-joining property provides the structure needed to establish deterministic large-distance costs and their convexity, together with a concave path-count growth rate, under the stated assumptions. For classes satisfying the additional generating-function conditions, this rate permits an entropy parameterization of the limiting cost, which is jointly continuous in entropy and direction on the corresponding domain. The two planar examples show how restrictions on local path features fit this framework.

We view this construction as a baseline for a broader class of models coupling physical performance to limited representational resources, with potential applications in the natural sciences, engineering, and the theory of stochastic processes. A natural next problem is to quantify the cost penalty imposed by path restrictions, including how this penalty decreases as turn or run constraints are relaxed, and how it depends on direction and the edge-weight distribution. More broadly, the formulation could be extended by incorporating more realistic biological constraints, giving rise to a wider family of models and potential applications.

\section*{Acknowledgments}
We thank Daniel E. Koditschek for insightful discussions that helped shape the problem formulation and its interpretation in terms of resource-limited agents.

\section*{AI tool disclosure}
The second author used OpenAI's large language models through Codex to assist with reviewing mathematical arguments, developing and simplifying proofs, checking calculations, and revising the exposition and \LaTeX{} source. AI-generated suggestions were reviewed and revised by the second author. The authors take responsibility for the mathematical content and final text.

%% file: tikz/BranchRW.tex
\begin{tikzpicture}[
    x=1cm,y=1cm,
    every node/.style={font=\small},
    vertex/.style={circle, draw, inner sep=1.4pt, minimum size=15pt},
    minvertex/.style={circle, draw, fill=red!15, inner sep=1.4pt, minimum size=15pt},
    edge/.style={draw, thick},
    minedge/.style={draw, very thick, red},
    edgelab/.style={midway, fill=white, inner sep=1pt, font=\scriptsize},
    genline/.style={dashed, gray}
]

\node[vertex] (r)   at (0,0) {};

\node[minvertex] (a)   at (2, 2.4) {};
\node[vertex] (b)   at (2,-2.4) {};

\node[vertex] (aa)  at (4, 3.6) {};
\node[minvertex] (ab)  at (4, 1.2) {};
\node[vertex] (ba)  at (4,-1.2) {};
\node[vertex] (bb)  at (4,-3.6) {};

\node[vertex] (aaa) at (6, 4.2) {};
\node[vertex]    (aab) at (6, 3.0) {};
\node[minvertex]    (aba) at (6, 1.8) {};
\node[vertex]    (abb) at (6, 0.6) {};
\node[vertex]    (baa) at (6,-0.6) {};
\node[vertex]    (bab) at (6,-1.8) {};
\node[vertex]    (bba) at (6,-3.0) {};
\node[vertex]    (bbb) at (6,-4.2) {};

\draw[minedge] (r) -- (a) node[edgelab, above left] {$\tau_{1,0}$};
\draw[edge]    (r) -- (b) node[edgelab, below left]{$\tau_{1,1}$};

\draw[edge] (a) -- (aa) node[edgelab, above] {$\tau_{2,0}$};
\draw[minedge]    (a) -- (ab) node[edgelab, below] {$\tau_{2,1}$};
\draw[edge]    (b) -- (ba) node[edgelab, above] {$\tau_{2,2}$};
\draw[edge]    (b) -- (bb) node[edgelab, below] {$\tau_{2,3}$};

\draw[edge] (aa) -- (aaa) node[edgelab, above] {$\tau_{3,0}$};
\draw[edge]    (aa) -- (aab) node[edgelab, below] {$\tau_{3,1}$};
\draw[minedge]    (ab) -- (aba) node[edgelab, above] {$\tau_{3,2}$};
\draw[edge]    (ab) -- (abb) node[edgelab, below] {$\tau_{3,3}$};
\draw[edge]    (ba) -- (baa) node[edgelab, above] {$\tau_{3,4}$};
\draw[edge]    (ba) -- (bab) node[edgelab, below] {$\tau_{3,5}$};
\draw[edge]    (bb) -- (bba) node[edgelab, above] {$\tau_{3,6}$};
\draw[edge]    (bb) -- (bbb) node[edgelab, below] {$\tau_{3,7}$};

\draw[genline] (0,-4.8) -- (0,4.8);
\draw[genline] (2,-4.8) -- (2,4.8);
\draw[genline] (4,-4.8) -- (4,4.8);
\draw[genline] (6,-4.8) -- (6,4.8);

\node[below] at (0,-4.9) {layer-\(0\)};
\node[below] at (2,-4.9) {layer-\(1\)};
\node[below] at (4,-4.9) {layer-\(2\)};
\node[below] at (6,-4.9) {layer-\(n\)};

\node[align=left, anchor=west, red!80!black] at (6.8,1.8) {%
\(\displaystyle M_n=\tau_{1,0}+\tau_{2,1}+\tau_{3,2}\)
};

\end{tikzpicture}